\newtheorem{theorem}{Theorem}[section]
\newtheorem{cor}[theorem]{Corollary}
\newtheorem{prop}[theorem]{Proposition}
\theoremstyle{definition}
\theoremstyle{remark}
\numberwithin{equation}{subsection}
\theoremstyle{plain}
\newtheorem{conjecture}{Conjecture}
\newtheorem{question}{Question}
\newtheorem{problem}{Problem}
\def\square{\vbox{
      \hrule height 0.4pt
      \hbox{\vrule width 0.4pt height 5.5pt \kern 5.5pt \vrule width 0.4pt}
      \hrule height 0.4pt}}
\def\ch\mathrm{c h}
\long\def\symbolfootnote[#1]#2{\begingroup%
\def\thefootnote{\fnsymbol{footnote}}\footnote[#1]{#2}\endgroup}
\numberwithin{equation}{section}
\begin{document}

\bigskip

\title{Spatial graph as connected sum of a planar graph and a braid}
\author{Valeriy ~G.~Bardakov}
\address{Sobolev Institute of Mathematics, Novosibirsk State University, Novosibirsk 630090, Russia}
\address{Novosibirsk State Agrarian University, Dobrolyubova street, 160, Novosibirsk, 630039, Russia}
\address{Tomsk State University, pr. Lenina, 36, Tomsk, 634050, Russia.}
\email{bardakov@math.nsc.ru}

\author{Akio Kawauchi}
\address{Department of Mathematics, Osaka City University
Sugimoto, Sumiyoshi-ku, Osaka 558-8585, Japan}
\email{kawauchi@sci.osaka-cu.ac.jp}

\subjclass[2000]{Primary 57M25; Secondary 57M15}
\keywords{Spatial graph, planar graph, tangle, braid, fundamental group of spatial graph}

\thanks{The authors gratefully acknowledge the support of the grant RNF-16-11-10073. Also they thank Vera Gorbunova, who drew pictures for the article.}

\date{\today}


\begin{abstract}
In this paper we show that every finite spatial graph is a connected sum of a planar graph, which is a forest, i.e. disjoint union of finite number of trees  and a tangle.
 As a consequence we get that any finite  spatial graph is a connected sum of a planar graph and  a braid. Using these decompositions it is not difficult to find a set of generators and defining relations for the fundamental group of compliment of a spatial graph in 3-space $\mathbb{R}^3$.
\end{abstract}
\maketitle

\section{Introduction}


For studying classical links in three dimensional space $\mathbb{R}^3$ people use some  presentations of links. For example, link diagrams on a plane,  rectangular
diagram of a link  \cite{C}-\cite{C1}, link as the  closure of a braid, link as a plat \cite{Bir},  and so on.

Theory of spatial graphs  is a generalization of  link theory  in three dimensional space.
L.~Kauffman \cite{Kauf} defined two types of equivalence relations on the set of spatial graphs and prove some analogs of Reidemeister theorem. Also, he associated a collection of knots and links to a spatial graph that gives  computable invariants for spatial graphs. Other invariants were constructed in \cite{K1} (see also \cite{K}).

K.~Kanno, K.~Taniyama \cite{KT} for every oriented spatial graph  found a braid presentation. This
result is a generalization of Alexander's theorem.

In the present paper we suggest some other presentations  of a spatial graph. At first we  prove that every finite spatial graph is 
a connected sum of a planar graph, which is a forest, i.e. disjoint union of finite number of trees  and a tangle. Using the Alexander theorem,
we prove that this graph is a connected some of a forest and a braid (braid decomposition of  the spatial graph). Also, we construct another type decomposition without using the 
Alexander theorem. We prove that any spatial graph is a connected sum of a forest and a plat-braid that is some  symbiosis of a plat and a braid (plat decomposition  of  the spatial graph).

Using these decompositions it is not difficult to find a set of generators and defining relations for the fundamental group of compliment of a spatial graph in 3-space $\mathbb{R}^3$  (see Section 4).

\section{Combinatorial graphs and spatial graphs}

A combinatorial graph $\mathcal{G}$ consists of a four $(V, E, i, t)$, where $V$ is the set of {\it vertices}, $E$ is  the set of
{\it edges}, $i, t : E \to V$ are two functions, $i(e)$ is called the {\it beginning of} $e$ and $t(e)$ is called the {\it end of} $e$. We write $V(\mathcal{G})$ for the vertices of $\mathcal{G}$ and $E(\mathcal{G})$ for the edges of $\mathcal{G}$ when necessary. If $i(e) = t(e)$, then $e$ is called a {\it loop}. A graph $\mathcal{G}$ is called finite if $V$ and $E$ are finite.
A combinatorial graph can contains  multiple edges, but we will assume that $\mathcal{G}$ has no  vertices of degrees 0 and 1.

A {\it spatial graph} $\Gamma$ is a geometric realization of a combinatorial graph $\mathcal{G}$ in $\mathbb{R}^3$, that is an injective  map $\Gamma \to \mathbb{R}^3$ under which $V$ goes to a set of distinct points and any $e$ in $E$ goes to a topological interval $L_e$ that is a topological space homeomorphic to the close interval $[0, 1]$ in the set of real numbers $\mathbb{R}$, which is beginning in the image of the vertex $i(e)$ and ending in the image of the vertex $t(e)$ if $i(e) \not = t(e)$; if $i(e) = t(e)$, then $L_e$ homeomorphic to the circle. The topological interval $L_e$ can meet with the images of $V$ only in the beginning or ending points and two different topological intervals do not intersect at internal points. For examples, if $\mathcal{G}$ contains one vertex $v$ and one edge $e$, where $i(e) = t(e)$, then the set of spatial graphs is the set of knots in $\mathbb{R}^3$.
Two spatial graphs $\Gamma$ and $\Gamma'$ are {\it equivalent} if there is an orientation-preserving homeomorphism $h : \mathbb{R}^3 \longrightarrow \mathbb{R}^3$ sending $\Gamma$ onto $\Gamma'$. A fundamental topological problem (equivalence decision problem) on spatial graphs is:

\begin{problem}
By an effective method, decide whether or not two given spatial graphs of a combinatorial graph are equivalent.
\end{problem}

A diagram $D_{\Gamma}$ of a spatial graph $\Gamma$ is a regular projection of $\Gamma$ to a plane in $\mathbb{R}^3$.
Equivalence relation for spatial graphs is generated by a set
of local moves that generalize the Reidemeister moves for diagrams of knots. L.~Kauffman \cite{Kauf} added to the usual list of Reidemeister moves two moves
involving a vertex (moves IV and V in Figure 1).

\begin{center}
\includegraphics[width=10.0cm]{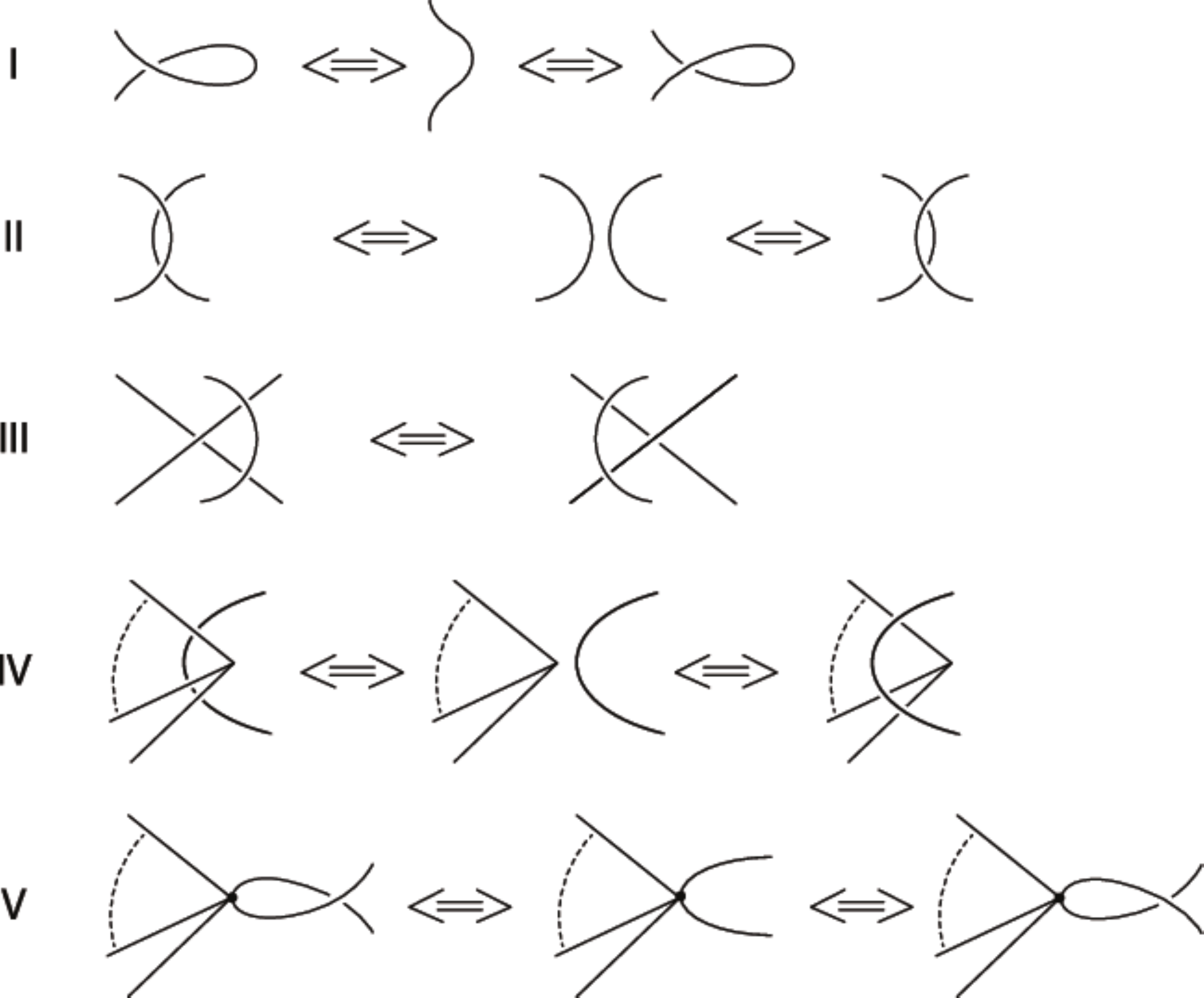}\\
{\small Fig. 1. Reidemeister moves}

\end{center}

Moves IV allows an edge to slide under or over a bundle of strands at a vertex.
Move V allows any two adjacent (in the planar diagram) strands at a vertex to twist around one another. This is the basic topological vertex move: two strands at the vertex can twist without affecting the other strands. L.~Kauffman proved that two spatial graphs $\Gamma$ and $\Gamma'$ are equivalent if and only if any diagram $D$ of $\Gamma$ is deformed into any diagram $D'$ of $\Gamma'$  by a finite sequence of the generalized Reidemeister moves I--V.

Recall that a connected non-empty graph is called a {\it tree} if it does not have cycles and multiples edges, a connected non-empty graph is called a
{\it forest} if it is a disjoint union of trees. It is evident, that every forest is a planar graph.

We say that a spatial graph $\Gamma$ is a {\it connected sum} of two spatial subgraphs $\Gamma_1$ and $\Gamma_2$ and write $\Gamma = \Gamma_1 \sharp \Gamma_2$, if  there is a 3-ball $B \subset \mathbb{R}^3$ such that its boundary $S^2 = \partial B$ does not contain vertices of $\Gamma$, the intersection $S^2 \cap \Gamma$ is a finite number of points, the closure of $(\mathbb{R}^3 - B) \cap \Gamma$ is equal to $\Gamma_1$ and the closure of $B \cap \Gamma$ is equal to $\Gamma_2$.

\section{Braid decomposition}

The main result of the present section is

\begin{theorem} \label{t1}
Let $\Gamma$ be a finite spatial graph in $\mathbb{R}^3$. Then there are a forest $T_0$ and a braid $\beta$ such that $\Gamma = T_0~ \sharp ~ \beta$.
\end{theorem}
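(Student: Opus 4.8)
The plan is to prove the theorem in two stages: first realise $\Gamma$ as a connected sum $\Gamma = T_0 \sharp \tau$ of a forest $T_0$ with a tangle $\tau$, and then apply Alexander's theorem (and its spatial-graph generalisation due to Kanno and Taniyama) to replace the tangle $\tau$ by a braid $\beta$. The first stage is the geometric heart of the argument, and the second is where the Alexander-type result enters.

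For the first stage I would start from a regular diagram $D$ of $\Gamma$ in a plane $P \subset \mathbb{R}^3$, placed in general position so that $D$ has finitely many crossings and the images of the vertices are distinct points, none of them a crossing. Around each vertex $v_j$ I would take a small disc meeting $\Gamma$ only in $v_j$ together with short initial segments of the edges incident to $v_j$; the union of these ``stars'' contains no crossing and is a disjoint union of trees. I would then take a round ball $B$ whose boundary sphere $S^2 = \partial B$ has all the vertices and their stars on the outside and every crossing of $D$ on the inside, and, by a preliminary isotopy, push a short sub-arc of every edge (including the crossing-free ones) into $B$. After this isotopy the closure of $(\mathbb{R}^3 - B) \cap \Gamma$ is precisely the union of the stars, hence a forest $T_0$ whose only free ends lie on $S^2$ (and are therefore not vertices of $\Gamma$), while the closure of $B \cap \Gamma$ contains no vertex and is a system of properly embedded arcs, i.e.\ a tangle $\tau$. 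By the connected-sum definition this gives $\Gamma = T_0 \sharp \tau$.

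For the second stage I would designate one of the two endpoints on $S^2$ of each strand of $\tau$ as a ``top'' point and the other as a ``bottom'' point, orient each strand accordingly, and then run the Alexander braiding procedure inside $B$, isotoping $\tau$ into a braid $\beta$ carried by $B$. The braiding moves may drag the endpoints along $S^2$, but since $T_0$ is a forest it is unknotted and planar, so its legs can be re-routed to follow the moving endpoints while $T_0$ stays a forest. Crucially, I would braid $\tau$ in place rather than by first closing it up, since a closure arc could join two legs of the same star and create a cycle outside $B$. Carrying out the braiding inside $B$ and simultaneously adjusting $T_0$ outside $B$ yields $\Gamma = T_0 \sharp \beta$.

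I expect the delicate points to be, first, checking that after pushing every edge into $B$ the exterior really is a disjoint union of stars — so that it is a forest and not some planar graph carrying cycles — and, second, controlling the endpoints on $S^2$ during the braiding so that the re-routing of the forest legs never closes up a cycle. The acyclicity and planarity of a forest is exactly what makes this re-routing harmless, so the second point, rather than the isotopy itself, is where I would concentrate the care.
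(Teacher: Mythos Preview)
Your two-stage strategy matches the paper's: first decompose $\Gamma$ as forest $\sharp$ tangle, then invoke Alexander to upgrade the tangle to a braid. Your first stage differs from the paper's in a harmless way --- the paper chooses a maximal tree of $\Gamma$ and pushes only the edges \emph{not} in that tree into the tangle region, whereas you push a sub-arc of every edge in and leave a disjoint union of vertex-stars outside $B$. Both give a legitimate forest-plus-tangle decomposition; yours simply produces a tangle with more strands and a forest with more components.

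The second stage, however, has a genuine gap. You propose to isotope $\tau$ to a braid $\beta$ inside $B$, letting the endpoints slide on $S^2$ and re-routing the forest legs to follow. But a tangle on $k$ strands is not in general isotopic to a $k$-string braid, even with sliding endpoints: a single strand in $B$ carrying, say, a trefoil pattern cannot be isotoped to a $1$-braid, since a $1$-braid is an unknotted arc. Alexander's procedure does not merely rearrange the existing strands; it \emph{increases} the strand count by throwing wrong-way arcs over the braid axis. In the tangle picture this amounts to pushing each local maximum and minimum out through the top or bottom of the box, which creates new short unknotted arcs outside $B$. The paper makes exactly this move: it writes the tangle $\overline{\beta}$ as a connected sum of a braid $\beta$ (on more strings than $\overline{\beta}$ had) together with some unknotted, unlinked arcs, and then \emph{adds those arcs to the tree} $\overline{T}$ to obtain the larger forest $T_0$. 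Your proposal keeps the forest fixed up to re-routing of its existing legs, so there is nowhere for these extra arcs to go, and the braiding step fails as stated. The repair is easy once seen: allow the Alexander moves to enlarge both the braid and the forest simultaneously, exactly as the paper does.
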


For illustration of this theorem see Fig. 2, where  a forest is in the left box and a braid in the right box.

\begin{center}
\includegraphics[width=8.0cm]{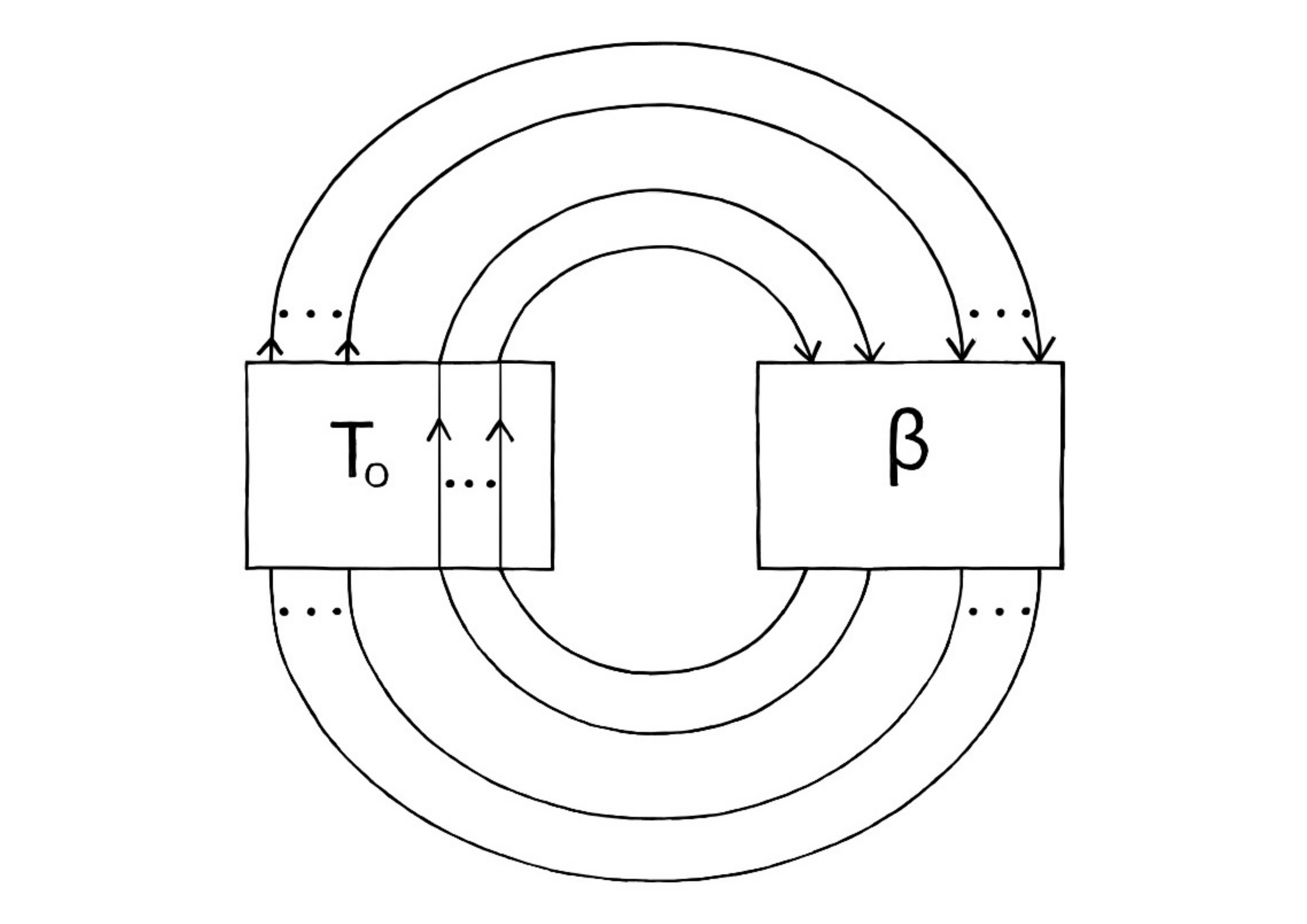}\\
{\small Fig. 2. Spatial graph as a connected sum of forest and braid}
\end{center}


We  prove the theorem for connected graphs. The general case is similar.
Let $\Gamma$ be a finite connected spatial graph in $\mathbb{R}^3$. Denote by $V(\Gamma)$ the set of all vertices of $\Gamma$ and by $E(\Gamma)$ the set
of all edges of $\Gamma$. Take some maximal tree $T$ of $\Gamma$. By the definition of maximal tree, the set of  $V(T)$  is equal to the set
$V(\Gamma)$ and the set of edges $E(T)$ is a subset of $E(\Gamma)$. If $E(T) = E(\Gamma)$, then $\Gamma = T$ and the theorem is true. Hence, we shall
assume that $E(T) \not= E(\Gamma)$. We  call the edges in $E(T)$ by {\it tree edges} and the edges in $E(\Gamma) \setminus E(T)$ by {\it spatial edges}.
The set $V(T)$ is the disjoint union of two subsets: $V(T) = I(T) \sqcup S(T)$, where $I(T)$ is the set of {\it inner vertices}, i.e. vertices  which are
incident only tree edges; $S(T)$ is the set of {\it spatial vertices}, i.e. vertices  which are
incident some  spatial edges.
Every  spatial edge $e$ has the  initial vertex $i(e)$ and the  terminal vertex $t(e)$, which lie in $S(T)$.

Transform the graph $\Gamma$ in $\mathbb{R}^3$ by the such manner that the tree $T$ lies on the plane $xOy$ and all spatial edges  lie in the
subspace $z \geq 0$ (see Fig. 3). To do it we can consider a diagram of $\Gamma$ and by using the generalized Reidemeister moves transform it to the needed diagram.

\begin{center}
\includegraphics[width=8.0cm]{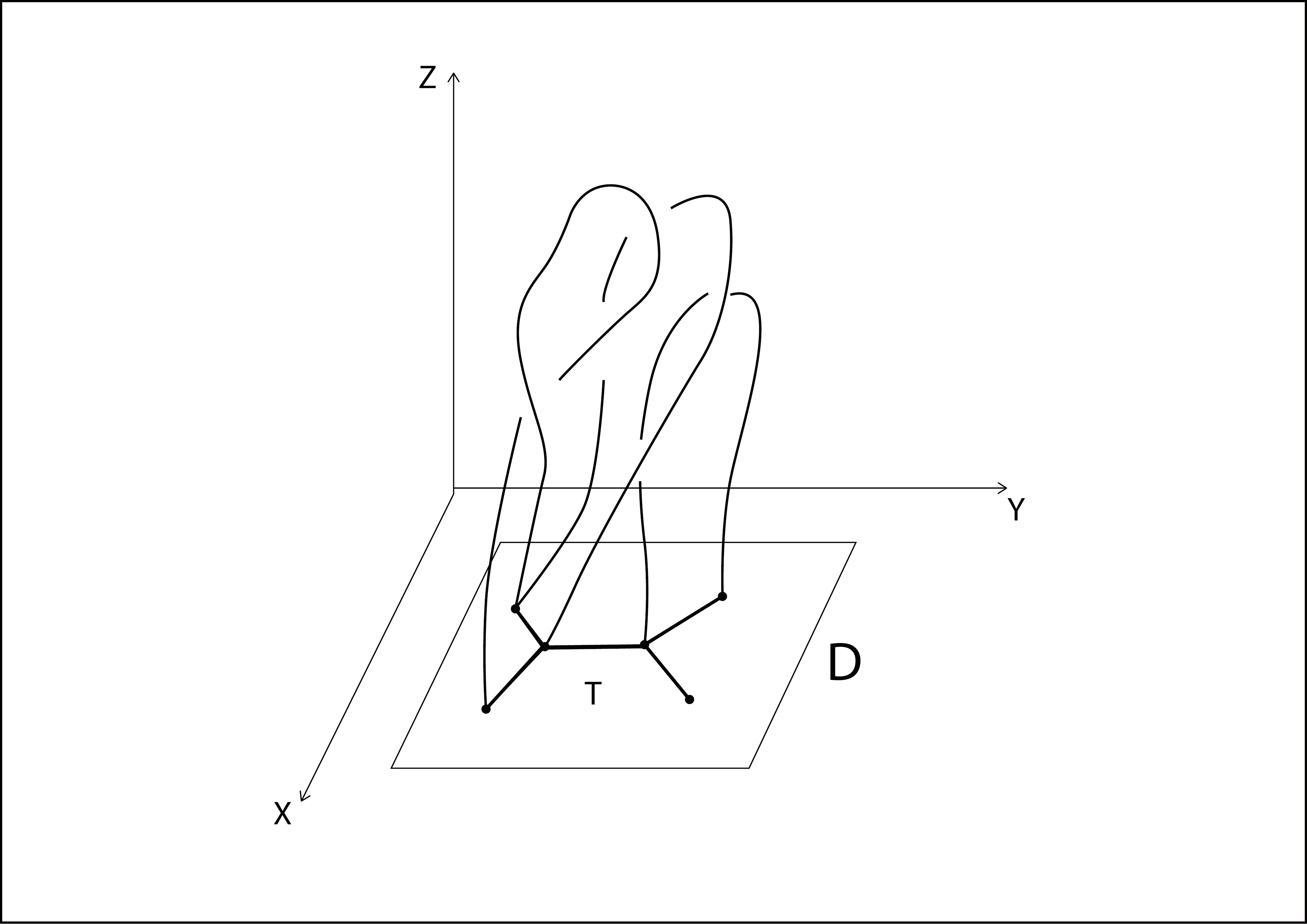}\\
{\small Fig. 3. A maximal tree $T$ on the plane}

\end{center}

We  assume that the tree $T$ lies inside of some rectangle $D$ those sides are parallel to the axes $Ox$ and $Oy$.
Take a some vertex in $S(T)$ and  call this vertex by the initial vertex  and  shift $T$ to a position where this vertex lies in the upper part of $D$ with respect to the $y$-coordinate; the vertices which are connected to the initial vertex by some edge in $E(T)$ lie in a lower level with respect to $y$-coordinate; the vertices 
which are connected to the vertices 
of the second level by some edge in $E(T)$  
lie in the third level and so on.  See the
upper picture of Fig. 3,
where vertex $a$
is the initial vertex and lies in the first level, vertices $b$, $c$ and $d$ lie in the second level and so on. Using induction by the number of levels
transform the graph $\Gamma$ to a graph in which all the vertices in $V(T)$ lie in the middle line of $D$. See the lower picture in Fig. 4, where we transformed the vertex $a$ from the first level to the second level.

\begin{center}
\includegraphics[width=8.0cm]{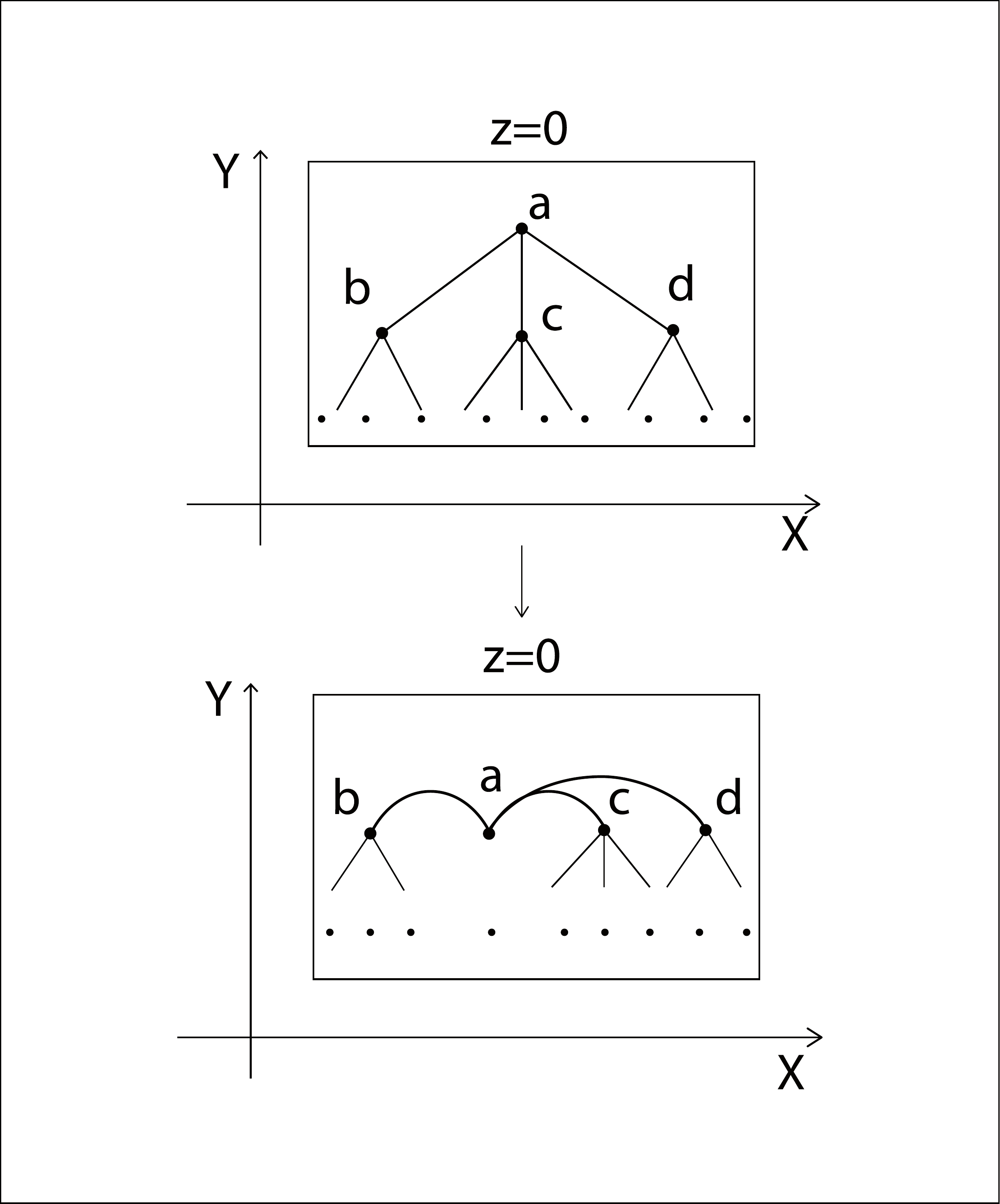}\\
{\small Fig. 4. Transformation of the tree}

\end{center}

For every vertex $v \in S(T)$ construct a square $V_v = V_v^0$ on the plane $xOy$, such that $v$ is the center of this square and the sides of this square are parallel to the axes
$Ox$ and $Oy$; $V_v$ does not contain the other vertices of $T$. Let $V_v^1$ be the orthogonal projection of $V_v$ onto the plane $z = 1$. Move
the spatial edges for which $v$ is the initial vertex to the position in which these edges intersect the upper side (side with larger $y$-coordinate)
of $V_v^1$ and the spatial edges
for which $v$ is the terminal vertex to the position in which these edges intersect the bottom side  (side with smaller $y$-coordinate) of $V_v^1$
(see Fig. 5).

\medskip

\begin{center}
\includegraphics[width=8.0cm]{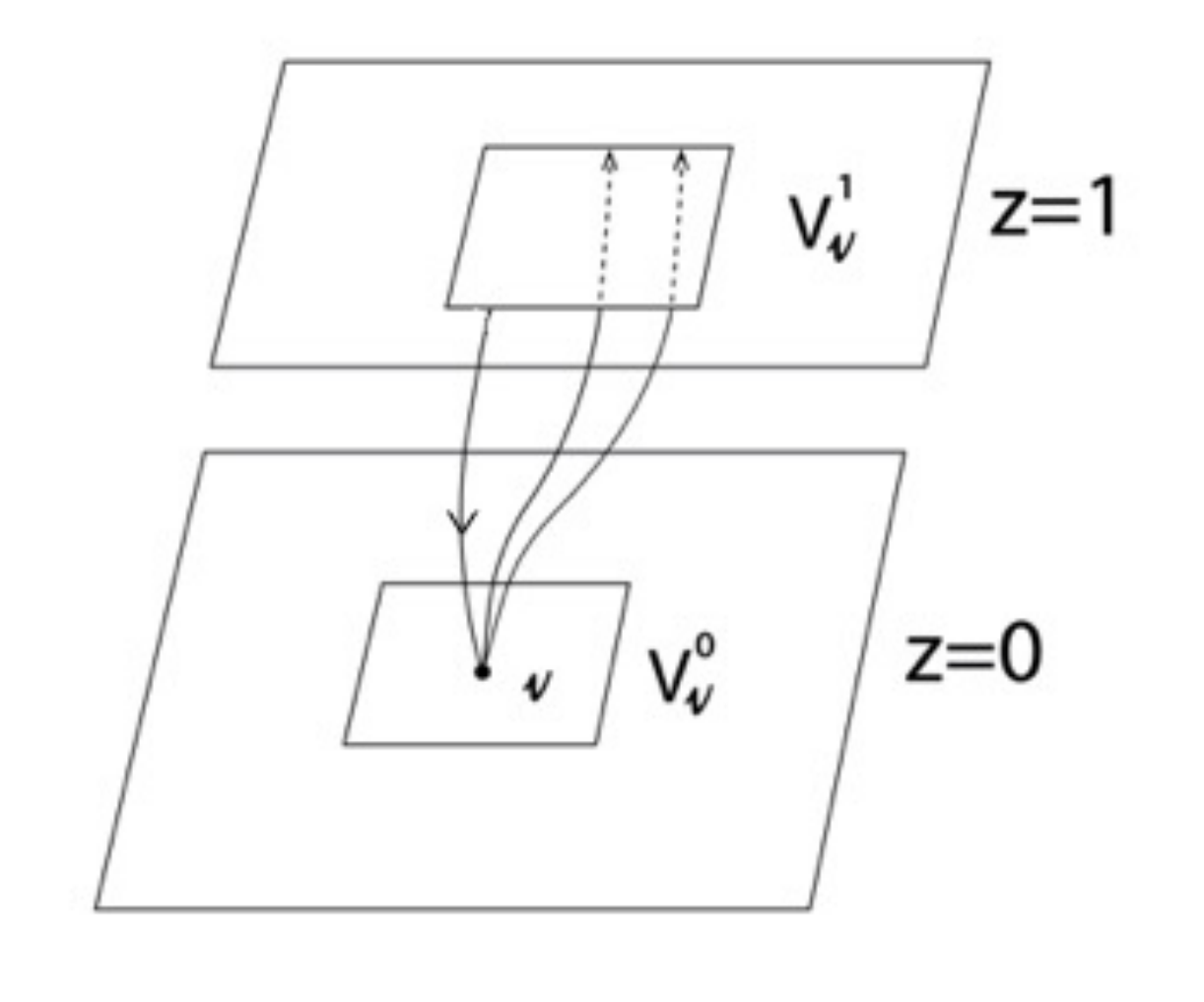}\\
{\small Fig. 5. Spatial edges in the neighborhood of a vertex}

\end{center}

\medskip

Do  this for all spatial vertices
to get in the plane $z = 1$ a picture as in Fig. 6.

Denote by $\overline{T}$ the closure of the intersection of $\Gamma$ with the subspace $z \leq 1$. We see that  $\overline{T}$  is a tree. Consider the
subspace $z \geq 1$. The intersection of this subspace with $\Gamma$ is the union of strings.

\medskip

\begin{center}
\includegraphics[width=15.0cm]{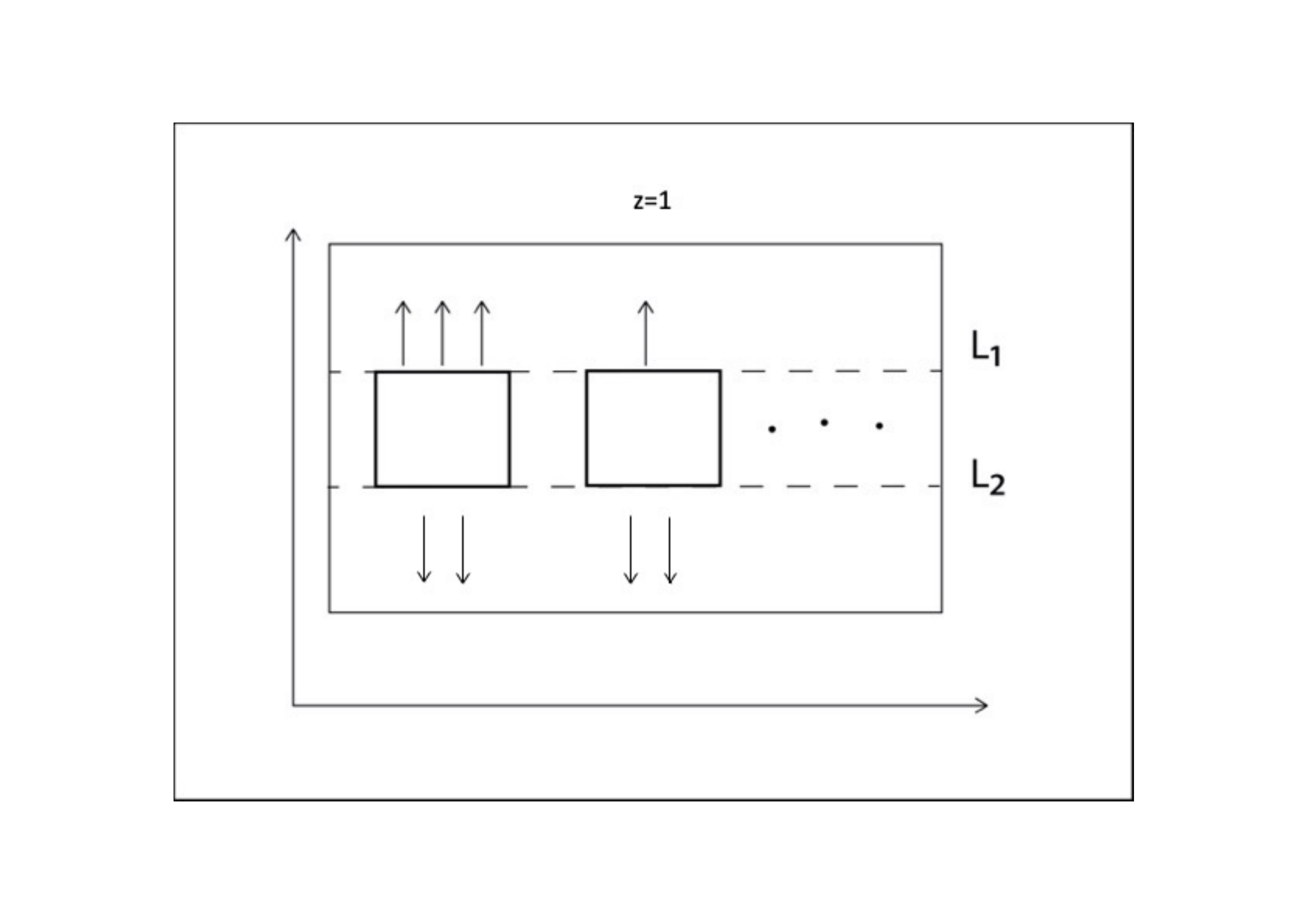}\\
{\small Fig. 6. Rectangles for the vertices }

\end{center}

\medskip

We can put these  strings in the box $D_1 \times [0,1]$, where,  $D_1$ be a rectangle in the plane $z = 1$  with sides that are parallel
to the coordinate lines $Ox$ and $Oy$. We get 
some $(m,m)$-tangle $\overline{\beta}$ (see Fig. 7), where $m$ is the set of spatial edges in $\Gamma$. Hence, $\Gamma$ is the connected sum of  $\overline{T}$  and $\overline{\beta}$. Tangle $\overline{\beta}$
does not contain free components and each his edge is stating  at the upper side and ending at the bottom side of the parallelepiped $D_1 \times [0,1]$. 

\medskip

\begin{center}
\includegraphics[width=8.0cm]{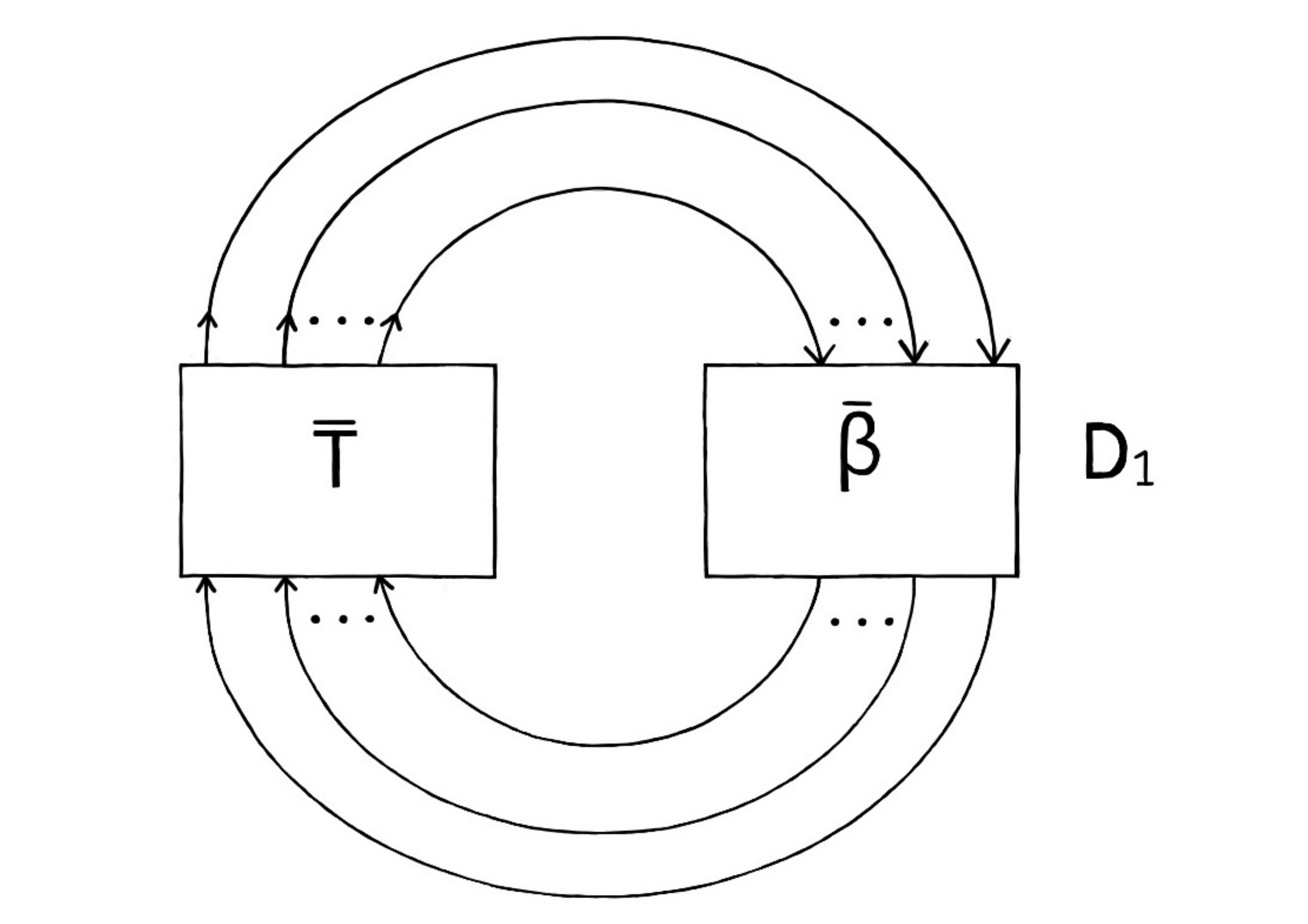}\\
{\small Fig. 7. Tree and tangle}

\end{center}

\medskip

Hence, we have proven

\begin{prop} \label{p1}
Any finite spatial graph $\Gamma$ is a connected sum of a forest $\overline{T}$ and a tangle $\overline{\beta}$.
\end{prop}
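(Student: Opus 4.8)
The plan is to prove the statement by an explicit geometric construction, reducing first to the connected case: if $\Gamma$ has several components I would treat each separately and take $\overline{T}$ to be the disjoint union of the resulting trees (a forest), while $\overline{\beta}$ is the disjoint union of the individual tangles placed side by side in a common box. So assume $\Gamma$ is connected. The first step is to select a maximal (spanning) tree $T \subset \Gamma$, so that $V(T) = V(\Gamma)$ and the non-tree edges are exactly the spatial edges; if there are none then $\Gamma = T$ is already a forest and there is nothing to prove, so assume otherwise and write $m = |E(\Gamma) \setminus E(T)|$ for the number of spatial edges.

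The core of the argument is to put the spatial embedding into a normal form by an ambient isotopy, realized on diagrams by the generalized Reidemeister moves I--V. I would isotope $\Gamma$ so that $T$ lies flat in the plane $xOy$ inside a rectangle $D$ and every spatial edge is pushed into the half-space $z \geq 0$. After arranging the spatial vertices $S(T)$ along a common horizontal line of $D$, I would surround each $v \in S(T)$ by a small square $V_v$ in $xOy$ together with its lift $V_v^1$ to the plane $z = 1$, and then drag the spatial edges so that each edge having $v$ as its initial vertex pierces the top side of $V_v^1$ and each edge having $v$ as its terminal vertex pierces the bottom side. The net effect is that every spatial edge leaves the tree region vertically, so that its portion above $z = 1$ lies inside a product box $D_1 \times [0,1]$ capping the slab $z \geq 1$.

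With $\Gamma$ in this form I would cut along the plane $z = 1$ and set $\overline{T}$ to be the closure of $\Gamma \cap \{z \leq 1\}$ and $\overline{\beta}$ the closure of $\Gamma \cap \{z \geq 1\}$. Two checks remain. First, $\overline{T}$ consists of $T$ together with the short initial and terminal arcs of the spatial edges running up to $z = 1$; since each such arc is attached to an existing vertex and carries a new free endpoint at the level $z = 1$, it is a pendant edge, and adjoining pendant edges to a tree produces neither a cycle nor a multiple edge, so $\overline{T}$ is again a tree (a forest in the disconnected case). Second, $\overline{\beta}$ is the union of the $m$ sub-arcs of the spatial edges lying above the cutting plane, each joining an initial piercing point to a terminal piercing point; grouping the $m$ initial piercings along one side of $D_1$ and the $m$ terminal piercings along the opposite side exhibits $\overline{\beta}$ as an $(m,m)$-tangle with no closed components. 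Finally, to match the definition of connected sum I would take the $3$-ball $B$ to be bounded by a sphere that agrees with the plane $z = 1$ in the region carrying the $2m$ piercing points and bulges into $z > 1$ to enclose the box; then $\partial B$ contains no vertices of $\Gamma$ (all of which sit at $z = 0$) and meets $\Gamma$ in exactly those $2m$ points, the closure of $B \cap \Gamma$ is $\overline{\beta}$, and the closure of $(\mathbb{R}^3 - B) \cap \Gamma$ is $\overline{T}$, so $\Gamma = \overline{T} \,\sharp\, \overline{\beta}$.

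The step I expect to be the main obstacle is justifying that this normalization is genuinely achievable by an ambient isotopy while controlling both pieces at once: one must flatten $T$ into a single plane, push every spatial edge above it, and then route the spatial arcs monotonically through the level $z = 1$ without introducing a crossing that would create a cycle in $\overline{T}$ or a closed loop in $\overline{\beta}$. Making this routing precise — checking in particular that the squares $V_v$ can be chosen pairwise disjoint, small enough to avoid the remaining part of $T$, and that the connecting arcs between the levels $z = 0$ and $z = 1$ can be taken strictly increasing in $z$ — is where the real work lies, even though each individual move is elementary.
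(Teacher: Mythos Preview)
Your proposal is correct and follows essentially the same construction as the paper: reduce to the connected case, choose a maximal tree $T$, flatten $T$ into the plane $xOy$ with the spatial edges pushed into $z\geq 0$, align the vertices along a line of a rectangle $D$, use the squares $V_v$ and their lifts $V_v^1$ at height $z=1$ to normalize how the spatial edges leave the tree, and then cut at $z=1$ to obtain the tree $\overline{T}$ below and the $(m,m)$-tangle $\overline{\beta}$ above. Your added justifications (that pendant arcs keep $\overline{T}$ a tree, and the explicit choice of the ball $B$) are details the paper leaves implicit, and the obstacle you flag is exactly the informal isotopy step the paper handles by appealing to the generalized Reidemeister moves.
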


In this decomposition the forest $\overline{T}$ contains $2m$ edges, where $m = |E(\Gamma)| - |E(T)|$, which do not lie in the maximal forest $T$ of $\Gamma$. From these edges $m$ have beginning vertex of valence 1, denote them $a_1, a_2, \ldots, a_m$ and  will call {\it incoming edges}. Other $m$ edges have terminating vertex of valence 1, denote them $b_1, b_2, \ldots, b_m$ and  will call {\it outgoing edges}. The tangle $\overline{\beta}$ is $(m,m)$-tangle that is a tangle with $m$ incoming strings and $m$ outgoing strings. To get the spatial graph $\Gamma$ we must gluing the outgoing edges of the forest with the incoming strings of the tangle and the outgoing strings of the tangle with the incoming edges of  $\overline{T}$.

Using  Proposition  \ref{p1} we can prove Theorem \ref{t1}.

\begin{proof}
We assume that the spatial graph $\Gamma$ is connected and 
by Proposition \ref{p1} it is the connected sum of  the tree  $\overline{T}$  and the tangle $\overline{\beta}$. Using the same idea as in the proof of the  Alexander theorem
\cite[Chapter 2.1]{Bir}, we can present $\overline{\beta}$ as a connected sum of a braid $\beta$ and some number of unknotted and unlinked arcs.
Adding these arcs to the tree $\overline{T}$ we get the forest $T_0$ and the decomposition of $\Gamma$  as the connected sum of    $T_0$  and the braid $\beta$ (see Fig. 2).
\end{proof}

\section{Plat decomposition}

Any link can be present as a closure of a braid or as a plat (see \cite{Bir}). We introduce some object that is a symbiosis of a plat and a braid.  Let $k, m$ be non-negative integer numbers
and
$$
I^3 = \{ (x, y, z) \in \mathbb{R}^3~|~0 \leq x, y, z \leq 1 \}
$$
be the cub with the side 1. On the upper side take $2k+m$ points $P_1$,  $P_2$, $\ldots$, $P_{2k+m}$, which lie in the plane $y = 1/2$ and the point with bigger index has bigger $x$-coordinate.  On the  bottom side also 
take $2k+m$ points $Q_1$,  $Q_2$, $\ldots$, $Q_{2k+m}$, which also  lie in the plane $y = 1/2$ and the point with bigger index has bigger $x$-coordinate.

The $m$-component $k$-{\it plat-braid} or simply $(k, m)$-plat-braid $L_{k,m} = L_1 \sqcup L_2 \sqcup \ldots \sqcup L_m$ is the disjoint union of $m$ topological intervals, which are images   of $m$ segments $[0, 1]$ into the 3-space $\mathbb{R}^3$ and  the following conditions hold:
 
1) all points $P_i$ and  $Q_i$ lie on $L_{k,m}$ and 
$$
L _{k,m} \cap \partial  I^3 = \{ P_1,  P_2, \ldots, P_{2k+m}, Q_1,  Q_2, \ldots, Q_{2k+m} \};
$$

2) $\partial L_{k,m}$ lie in $\mathbb{R}^3 \setminus I^3$, the first intersection point of  $L_i$ and $I^3$ is $P_i$ and the last intersection point of  $L_i$ and $I^3$ is $Q_i$
for all $i \in \{ 1, 2, \ldots, m \}$.

3) Any pair of points $\{P_j, P_{j+1} \}$, $\{Q_j, Q_{j+1} \}$, $j = 1, 2, \ldots, k-1$ is connected by some arc which is a part of $L_{k,m}$.

4) If we forget about orientation of $L_{k,m}$, then the intersection $L _{k,m} \cap  I^3$ is an $2k+m$-braid $\gamma$.

For example of $(2,2)$-plat-braid see Figure 8.

\begin{center}
\includegraphics[width=5.0cm]{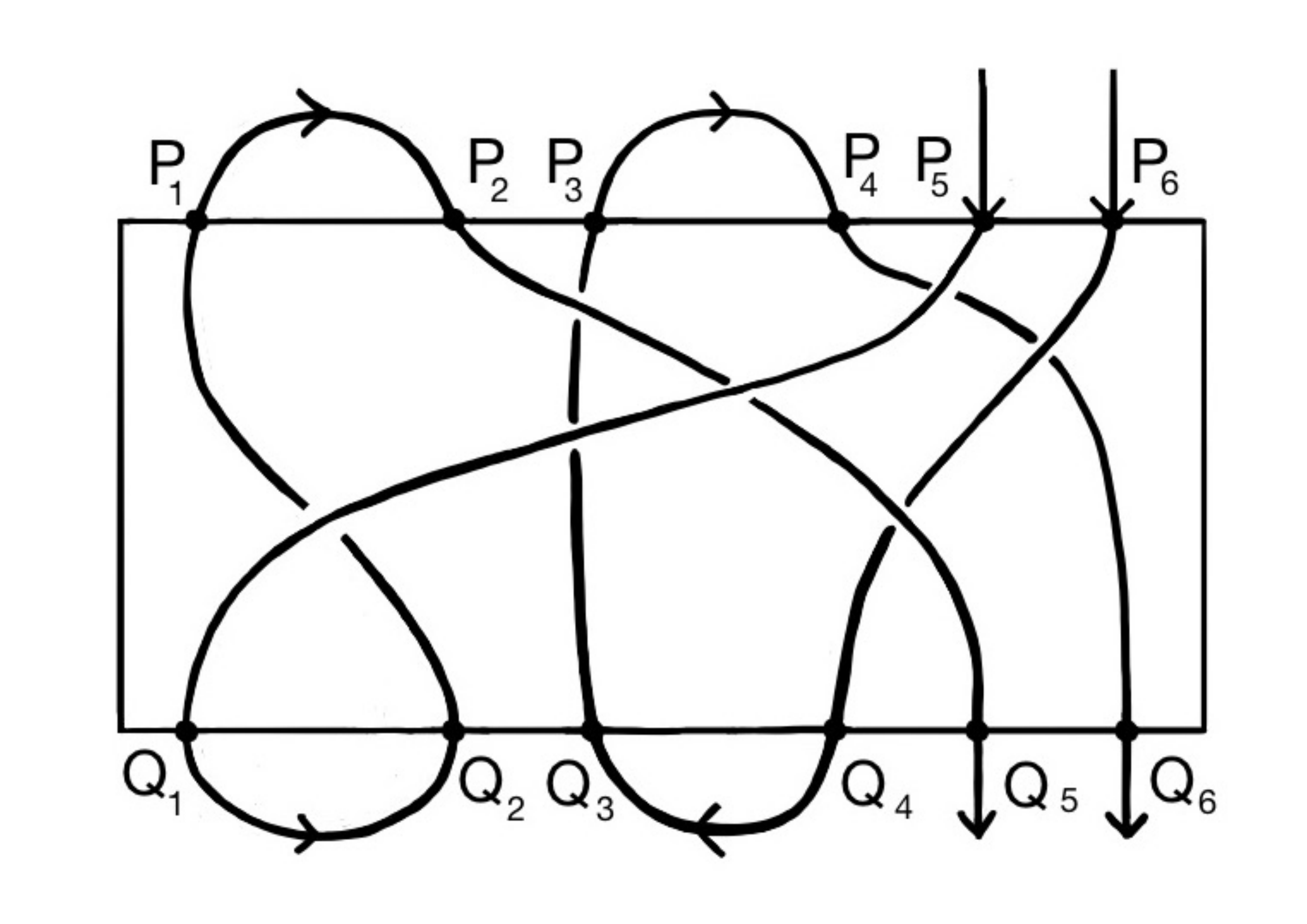}\\
{\small Fig. 8. Example of $(2,2)$-plat-braid}
\end{center}

The main result of the present section is

\begin{theorem} \label{t2}
Let $\Gamma$ be a finite spatial graph with $n$ connected components in $\mathbb{R}^3$. Then there are a forest $\overline{T}$, which is a disjoint union of $n$ trees and a plat-braid $\gamma$ such that $\Gamma = \overline{T}~ \sharp ~ \gamma$.
If $\Gamma$ is a connected spatial graph, then the forest $\overline{T}$ is a tree.
\end{theorem}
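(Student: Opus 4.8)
The plan is to follow the same route as Theorem~\ref{t1}, but to replace the appeal to Alexander's theorem by a height-function (bridge-type) normalization, which is exactly what produces the plat part of a plat-braid. As in the proof of Theorem~\ref{t1}, I would first reduce to the connected case: applying the argument below to each of the $n$ connected components separately produces $n$ trees whose disjoint union is the desired forest $\overline{T}$, and if $\Gamma$ is connected there is a single tree. So assume $\Gamma$ is connected and invoke Proposition~\ref{p1} to write $\Gamma = \overline{T}\sharp\overline{\beta}$, where $\overline{T}$ is a tree and $\overline{\beta}$ is an $(m,m)$-tangle sitting in a box $D_1\times[0,1]$, with no closed components and with each string running from the top face to the bottom face.

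Next I would put the tangle $\overline{\beta}$ in \emph{bridge position} with respect to the height function $z$. Using the generalized Reidemeister moves (general position together with an isotopy of the diagram), arrange that $z$ restricted to $\overline{\beta}$ is Morse with distinct critical values and that every crossing occurs at a height strictly between the lowest local maximum and the highest local minimum; that is, slide every local maximum above every crossing and every local minimum below every crossing. Crucially, this uses only Morse theory and general position for the one-dimensional object $\overline{\beta}$ and does \emph{not} require Alexander's theorem. A sign-change count for $z'$ along each string (which starts and ends by descending, so its interior critical points read $\min,\max,\min,\dots,\max$) shows that every string carries equally many maxima as minima; summing, let $k$ be the resulting total number of maxima, equal to the total number of minima.

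Now I would read off the plat-braid. Choose a cube $I^3$ enclosing precisely the crossing region, so that all $k$ maxima protrude above its top face and all $k$ minima protrude below its bottom face. Each maximum then rounds off into a cap and each minimum into a cup, lying outside $I^3$; these caps and cups meet the top and bottom faces in $2k$ points each, while the $m$ strings continue outward to their free ends and contribute $m$ further points on each face, so that each face carries $2k+m$ points and condition (3) can be arranged after relabelling the points so that capped pairs are adjacent. Inside $I^3$ there are no critical points of $z$, hence all $2k+m$ strands are monotone and $\overline{\beta}\cap I^3$ is a genuine $(2k+m)$-braid $\gamma$, which is condition (4). The $2m$ free ends lie outside $I^3$ and are precisely the points that were glued to $\overline{T}$ in Proposition~\ref{p1}. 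Enlarging the connected-sum sphere to enclose the caps and cups then exhibits $\overline{\beta}$ as a $(k,m)$-plat-braid and yields $\Gamma = \overline{T}\sharp\gamma$.

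The main obstacle is the bridge normalization of the second paragraph: one must check carefully that every local maximum can be slid above, and every local minimum below, all of the crossings while the $2m$ endpoints stay fixed, so that after the isotopy all critical points of $z$ lie outside $I^3$ (this is what makes the inside an honest braid rather than a general tangle). One should also verify that the caps and cups can be taken disjoint and suitably nested to realize condition (3), and recount the components to confirm that the free ends pair up into exactly $m$ arcs $L_1\sqcup\dots\sqcup L_m$, matching the definition of $L_{k,m}$. The bookkeeping that the forest is a single tree, or $n$ trees in the disconnected case, is inherited verbatim from Proposition~\ref{p1}.
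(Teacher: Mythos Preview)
Your argument is correct and is essentially the same as the paper's: start from Proposition~\ref{p1}, put the tangle diagram in bridge position by pushing all local maxima above and all local minima below the crossings, and read off the $(k,m)$-plat-braid. The only superficial differences are that the paper works in the projected diagram using the $y$-coordinate rather than a height function $z$, and states without proof that the numbers of maxima and minima agree, whereas you supply the sign-change argument.
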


For illustration of this theorem see Figure 9. We shall call the decomposition of  $\Gamma$ from this theorem by {\it plat decomposition}

\begin{center}
\includegraphics[width=5.0cm]{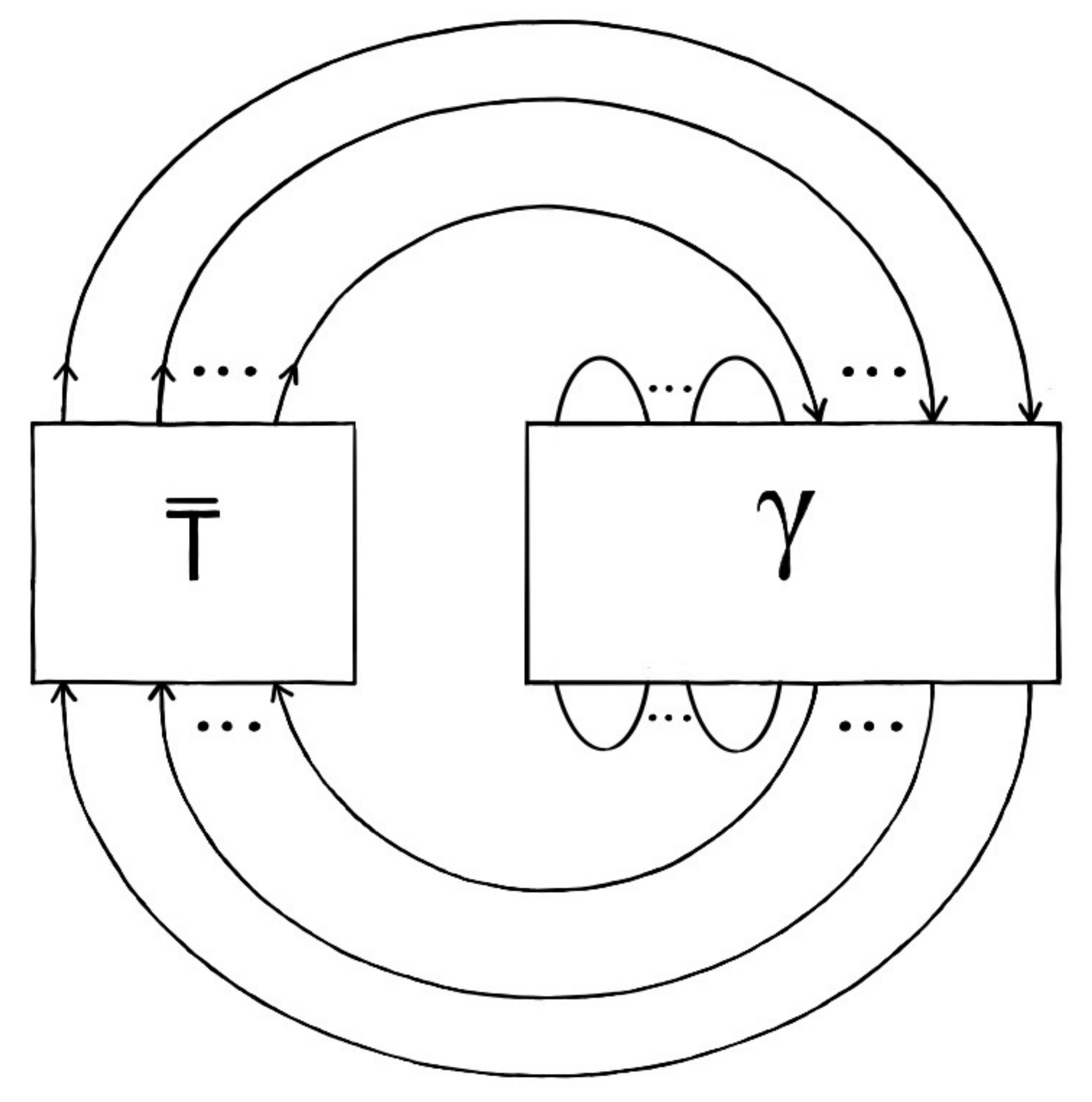}\\
{\small Fig. 9. Spatial graph as decomposition of a forest and a plat-braid}
\end{center}

\begin{proof}
We assume that the spatial graph $\Gamma$ is connected and 
by Proposition \ref{p1} it is the connected sum of  the tree  $\overline{T}$  and the tangle $\overline{\beta}$.

We shall transform $\overline{\beta}$ to get some plate-braid. We can assume that under the projection of $\overline{\beta}$ onto the plane $z = 1$ all crossings lie inside $D_1$ and  that this projection is regular projection. This projection has finite number of local maximums and local minimums with respect to
$y$-coordinate.
We can check that the number of the local maximums is equal to the number of  local minimums. Denote by $m_1$, $m_2$, $\ldots$, $m_k$ the points
of local maximums and by $l_1$, $l_2$, $\ldots$, $l_k$ the points of local minimums (see Fig. 10). Using isotopy we will move the arcs with
maximum to the upper side of $D_1$ and the arcs with minimum to the bottom side of $D_1$.

\medskip

\begin{center}
\includegraphics[width=8.0cm]{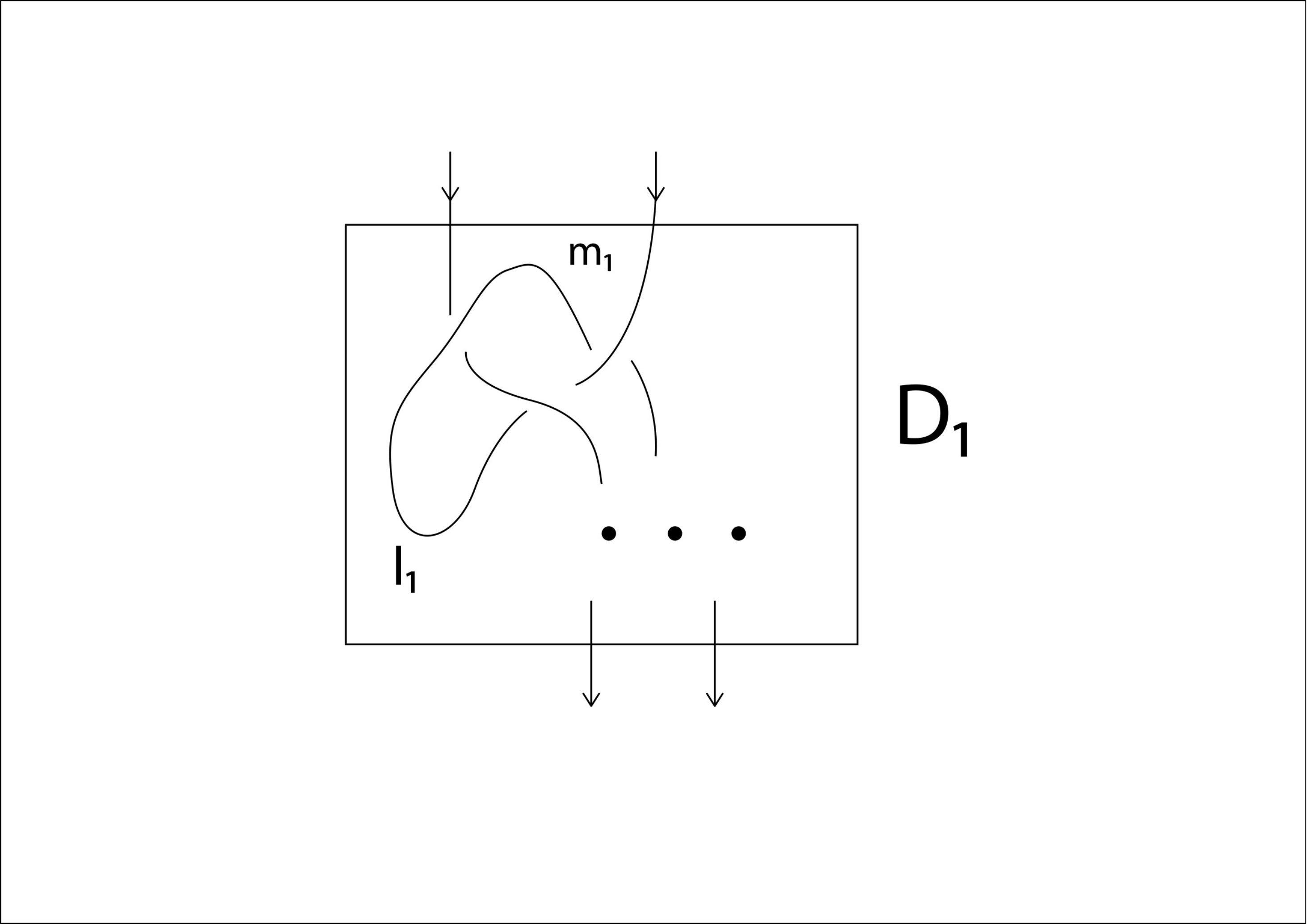}\\
{\small Fig. 10. Local maximum and local minimum}

\end{center}

\medskip


Moving this arcs until they will be outside of $D_1$. Using the transformations as in Figure 11 we can assume that outside of $D_1$ all arcs
with maximum lie on the left side from the edges which go inside the rectangle and all arcs with minimum lie on the left side from the edges which go
outside the rectangle.

\medskip

\begin{center}
\includegraphics[width=8.0cm]{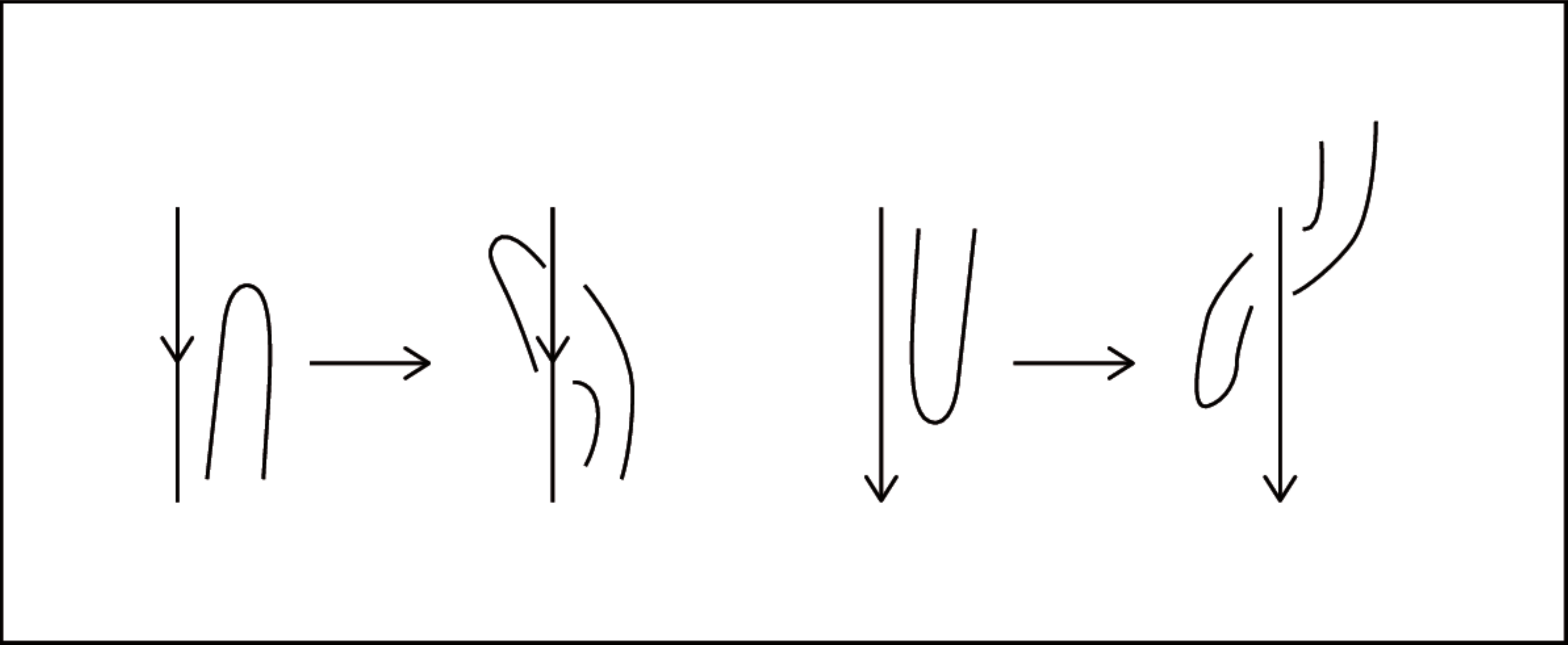}\\
{\small Fig. 11. Moving  of arcs with local maximum and minimum }

\end{center}

\medskip

At the end we  transform the $(m, m)$-tangle $\overline{\beta}$ to the $(k, m)$-plat-braid $\gamma$ in which the number of connected components is
$m = |E(\Gamma)| - |E(T)|$ (see  Fig. 12).

\medskip

\begin{center}
\includegraphics[width=8.0cm]{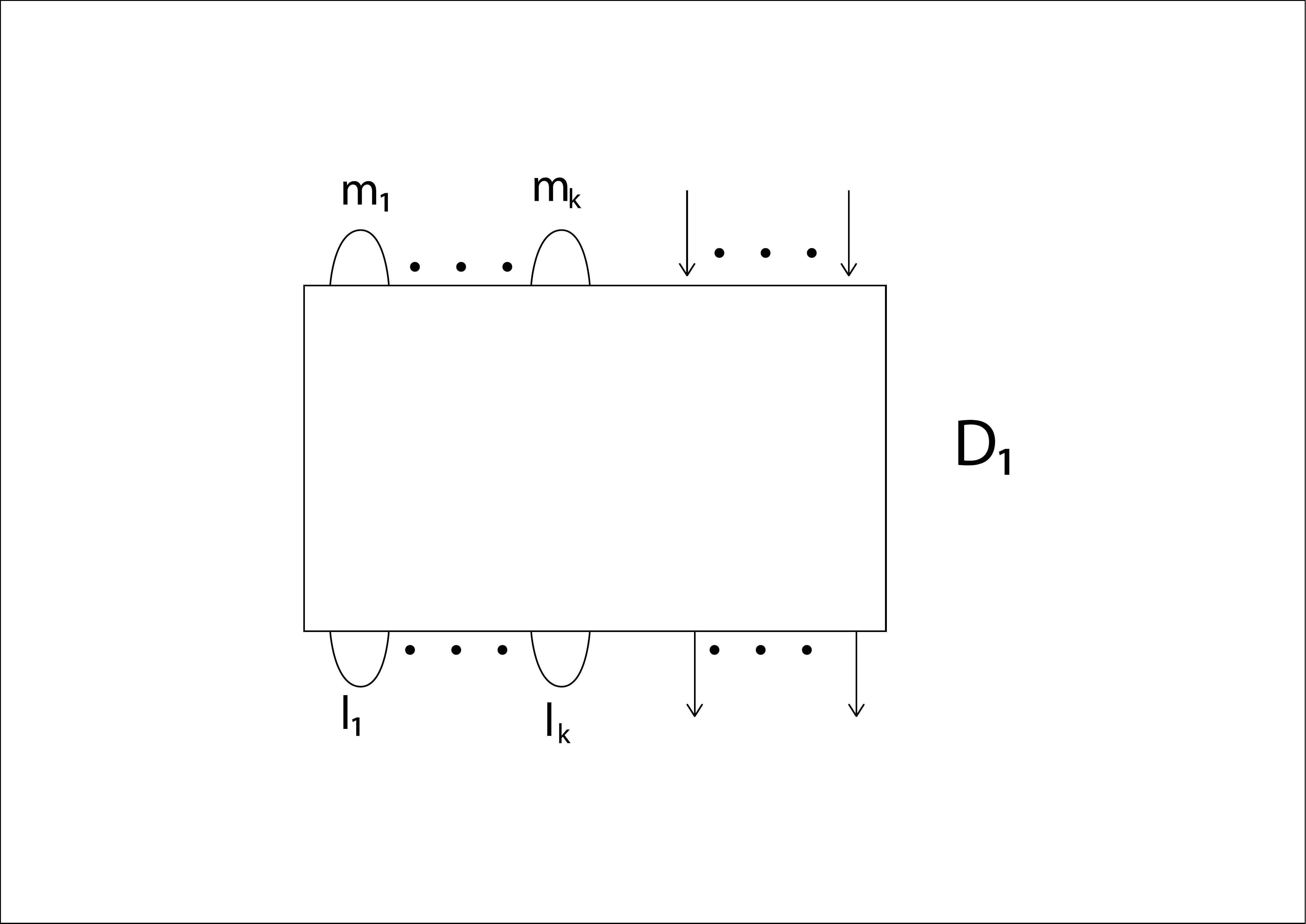}\\
{\small Fig. 12. The $(k, m)$-plat-braid $\gamma$}

\end{center}

\medskip

Take the decomposition $\Gamma = \overline{T} ~ \sharp ~\overline{\beta}$, constructed in Proposition \ref{p1}, cut the tangle $\overline{\beta}$ and paste
the $(k, m)$-plat-braid $\gamma$, then we get the plat decomposition $\Gamma = \overline{T} ~ \sharp ~\gamma$.

\end{proof}

\bigskip

\section{Some applications}

\subsection{Braid index} Using the braid decomposition we can introduce some invariant of spatial graph.  The {\it braid index $bi(T_0 ~ \sharp ~\beta)$ of the braid decomposition} $\Gamma = T_0 ~ \sharp ~\beta$ is the number of strings in the braid  $\beta$. The {\it braid index} $bi(\Gamma)$ of the spatial graph $\Gamma$ is the minimum $bi(T_0 ~ \sharp ~\beta)$ by all possible  braid decompositions $\Gamma = T_0 ~ \sharp ~\beta$. 

It is not difficult to see that $bi(\Gamma)$ is an invariant of $\Gamma$.

\begin{question}
Are there some connections of $bi(\Gamma)$ with other invariants of $\Gamma$?
\end{question}

\subsection{Groups of spatial graphs} Let $\Gamma$ be a spatial graph, then its group $G_{\Gamma}$ is the fundamental group of the complement $\Gamma$
in 3-space: $G_{\Gamma} = \pi_1(\mathbb{R}^3 - \Gamma)$. We can find the set of generators and defining relations for this group. To do it, consider the regular projection $D_{\Gamma}$ of $\Gamma$  into the plane  and remove all vertices of valence $\geq 3$. Denote  the connected components of this diagram by $x_1, x_2, \ldots, x_s$ this is the generating set of $G_{\Gamma}$. Defining relations can be two types. Any crossing of $D_{\Gamma}$ corresponds defining relation as in Fig. 13.

\begin{figure}[h]
\noindent\centering{\includegraphics[height=0.2 \textwidth]{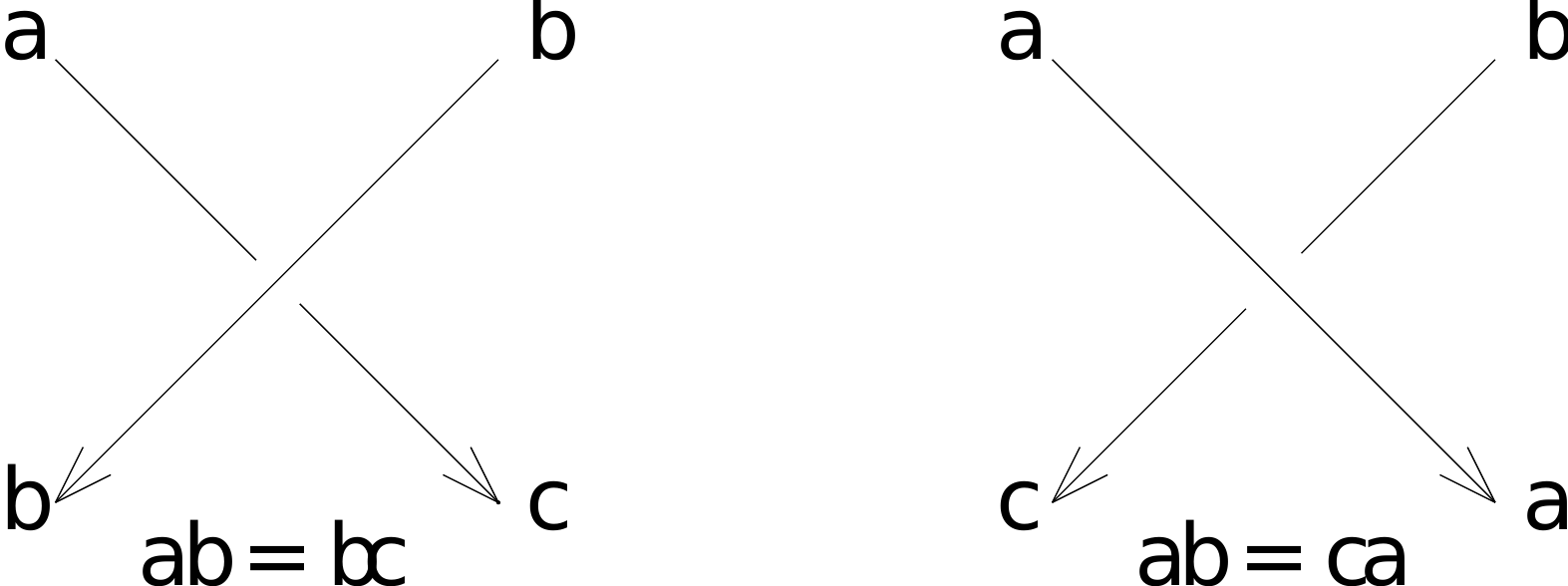}}\\
{\small Fig. 13. Relations in the crossings}
\label{sigma}
\end{figure}

Any vertex of $D_{\Gamma}$ corresponds defining relation as in Figure 14.

\begin{figure}[h]
\noindent\centering{\includegraphics[height=0.3 \textwidth]{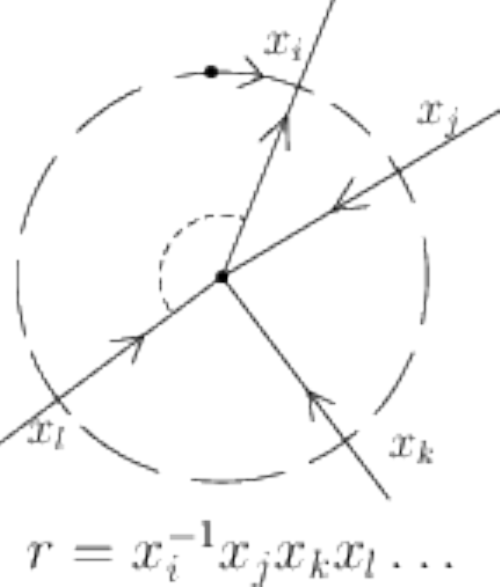}}\\
{\small Fig. 14. Relation in the vertex}
\label{sigma}
\end{figure}

For simplicity we will consider the case when $\Gamma$ is a connected graph.

Suppose we have decomposition $\Gamma = \overline{T}~ \sharp ~ \overline{\beta}$, constructed in Proposition \ref{p1}. To find $G_{\Gamma}$, define a group $G_{\overline{T}}$ and a group $G_{\overline{\beta}}$. Suppose that a 3-ball $B$ contains the forest $\overline{T}$ and $\overline{\beta} = \Gamma \setminus (B \cap \overline{T})$. Then
$$
G_{\overline{T}} = \pi_1 (B \setminus \overline{T}),~~~G_{\overline{\beta}} = \pi_1 \left( (\mathbb{R}^3 \setminus B) \setminus \overline{\beta} \right).
$$
To find $G_{\Gamma}$ note that $G_{\overline{T}} = G_{\overline{T_1}}$, where $\overline{T_1}$ is a tree that is gotten from ${\overline{T}}$ by contracting of the maximal tree $T_0$ into a vertex. Hence, $\overline{T_1}$ contains  one vertex and $m$ incoming edges $a_1, a_2, \ldots, a_m$ and $m$ outgoing edges $b_1, b_2, \ldots, b_m$ and
$$
G_{\overline{T_1}} = \langle a_1, a_2, \ldots, a_m, b_1, b_2, \ldots, b_m ~||~ a_1 a_2 \ldots a_m b_m^{-1} b_{m-2}^{-1} \ldots b_1^{-1} = 1 \rangle.
$$

Let $b'_1, b'_2, \ldots, b'_m$ be the set of incoming strings in $\overline{\beta}$ and $a'_1, a'_2, \ldots, a'_m$ be the set of outcoming strings in $\overline{\beta}$. Then the group $G_{\overline{\beta}}$ contains elements
$$
a'_1, a'_2, \ldots, a'_m, b'_1, b'_2, \ldots, b'_m.
$$
Suppose that $G_{\overline{\beta}}$ is defined by a set of generators $\mathcal{X}$ and the set of relations $\mathcal{R}$, i.e.
$$
G_{\overline{\beta}} = \langle \mathcal{X} ~||~\mathcal{R} \rangle.
$$

Then from Van Kampen theorem follows

\begin{theorem} \label{t2}
The group $G_{\Gamma}$ is generated by elements
$$
\mathcal{X}, a_1, a_2, \ldots, a_m, b_1, b_2, \ldots, b_m
$$
and is defined by the relations
$$
\mathcal{R}, a_1 a_2 \ldots a_m b_m^{-1} b_{m-2}^{-1} \ldots b_1^{-1} = 1,~~b_1 = b'_m, b_2 = b'_{m-1}, \ldots, b_m = b'_1,
$$
$$
a_1 = a'_m, a_2 = a'_{m-1}, \ldots, a_m = a'_1.
$$
\end{theorem}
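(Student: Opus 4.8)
The plan is to apply the Seifert--van Kampen theorem to the splitting of $\mathbb{R}^3 \setminus \Gamma$ induced by the sphere $S^2 = \partial B$ of the connected sum from Proposition~\ref{p1}. First I would take open sets $U$ and $V$ obtained by thickening $B \setminus \Gamma$ and $(\mathbb{R}^3 \setminus B) \setminus \Gamma$ slightly across $S^2$, so that $U \cup V = \mathbb{R}^3 \setminus \Gamma$, $\pi_1(U) = G_{\overline{T}}$, $\pi_1(V) = G_{\overline{\beta}}$, and $U \cap V$ deformation retracts onto $S^2 \setminus (S^2 \cap \Gamma)$. Since the connected sum meets $S^2$ in exactly the $2m$ points through which the $m$ incoming and $m$ outgoing strands pass, this intersection is a $2m$-punctured sphere, whose fundamental group is free of rank $2m-1$, generated by meridians $\mu_1,\dots,\mu_{2m}$ of the punctures subject to the single relation $\mu_1\mu_2\cdots\mu_{2m}=1$. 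All path-connectedness and basepoint hypotheses are routine to arrange by placing the basepoint on $S^2 \setminus \Gamma$.

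Next I would identify the two inclusion-induced homomorphisms on $\pi_1(U \cap V)$. Each meridian $\mu_j$ is a small loop linking one strand of $\Gamma$ at $S^2$; based appropriately and pushed into $U$ it becomes one of the meridional generators $a_1,\dots,a_m,b_1,\dots,b_m$ of $G_{\overline{T}}=G_{\overline{T_1}}$, while pushed into $V$ it becomes the meridian $a'_1,\dots,a'_m,b'_1,\dots,b'_m$ of the same strand in $G_{\overline{\beta}}$. The Seifert--van Kampen theorem then presents $G_\Gamma$ as the pushout of $G_{\overline{T}} \leftarrow \pi_1(U\cap V) \rightarrow G_{\overline{\beta}}$, that is, as the free product $G_{\overline{T_1}} * G_{\overline{\beta}}$ modulo the relations identifying each meridian's two images. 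Since $G_{\overline{T_1}}$ contributes the generators $a_i,b_i$ with the single relation $a_1\cdots a_m b_m^{-1}\cdots b_1^{-1}=1$ and $G_{\overline{\beta}}=\langle\mathcal{X}\mid\mathcal{R}\rangle$ contributes $\mathcal{X}$ and $\mathcal{R}$ (each $a'_i,b'_i$ being a word in $\mathcal{X}$), this yields precisely the asserted generating set $\mathcal{X},a_i,b_i$ together with the relations $\mathcal{R}$, the tree relation, and the gluing relations $a_i=a'_{\sigma(i)}$, $b_i=b'_{\tau(i)}$.

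The step requiring the most care is pinning down the permutations $\sigma,\tau$, that is, verifying the reversal $a_i=a'_{m+1-i}$ and $b_i=b'_{m+1-i}$ rather than the identity matching. This reversal is forced by the geometry of the connected sum in Proposition~\ref{p1}: the outgoing edges of $\overline{T}$ are glued to the incoming strings of $\overline{\beta}$ (and likewise for the incoming edges), and when the separating sphere is viewed from its two opposite sides the linear order of the $2m$ puncture points along $S^2$ is reversed, exactly as in a mirror reflection. I would make this precise by fixing the coordinates of the puncture points on the level $z=1$ used in Proposition~\ref{p1}, tracking how the labels $1,\dots,m$ of the $b$-strands on the forest side correspond to those of the $b'$-strands on the tangle side, and reading off $b_i = b'_{m+1-i}$, with the analogous argument for the $a$'s.

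Finally I would check the one consistency point that keeps the presentation clean: the product-of-meridians relation $\mu_1\cdots\mu_{2m}=1$ coming from $\pi_1(U\cap V)$ must not impose anything new. Carried into the forest side it becomes, after accounting for the opposite orientations of the incoming and outgoing strands at the single vertex of $\overline{T_1}$, precisely the vertex relation $a_1\cdots a_m b_m^{-1}\cdots b_1^{-1}=1$ already listed; hence it is redundant in the pushout. No further relation survives, and the resulting presentation is exactly the one in the statement.
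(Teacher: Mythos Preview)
Your proposal is correct and follows exactly the approach the paper intends: the paper's entire proof is the single clause ``from Van Kampen theorem follows,'' and what you have written is precisely the standard Seifert--van Kampen computation that unpacks this. Your handling of the $2m$-punctured sphere, the meridian identifications, the index reversal from viewing $S^2$ from opposite sides, and the redundancy of the product-of-meridians relation are all sound and more detailed than anything the paper itself supplies.
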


In particular, if $\Gamma$ is a link then we can decompose it in the form $\Gamma = \overline{T}~ \sharp ~ \overline{\beta}$, where $\overline{T}$ is the disjoint union of $m$ edges $b_1 = a_1$, $b_2 = a_2$, $\ldots$, $b_b = a_m$ and we have

\begin{cor}
The group $G_{\Gamma}$ is generated by elements $\mathcal{X}$ and is defined by the relations
$$
\mathcal{R}, ~~a'_1 = b'_1, a'_2 = b'_{2}, \ldots, a'_m = b'_m.
$$
\end{cor}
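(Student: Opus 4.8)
The plan is to apply the Seifert--van Kampen theorem to the splitting of $\mathbb{R}^3 \setminus \Gamma$ coming from the connected sum $\Gamma = \overline{T} ~\sharp~ \overline{\beta}$ of Proposition \ref{p1}. Let $B$ be the $3$-ball of that decomposition, so $\overline{T}$ lies in $B$, the tangle $\overline{\beta}$ lies in $\mathbb{R}^3 \setminus B$, and $\Gamma$ meets the sphere $S = \partial B$ transversally in the $2m$ points where the $m$ incoming edges $a_1,\dots,a_m$ and the $m$ outgoing edges $b_1,\dots,b_m$ of the forest pass into the tangle. Put $U = \mathrm{int}(B) \setminus \overline{T}$ and $W = (\mathbb{R}^3 \setminus B) \setminus \overline{\beta}$, each enlarged to an open set by a collar across $S$ so that $U \cup W = \mathbb{R}^3 \setminus \Gamma$ and $U \cap W$ is a collar of $S$ with the $2m$ strands removed. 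Then $\pi_1(U) = G_{\overline{T}}$, $\pi_1(W) = G_{\overline{\beta}}$, and $U \cap W$ deformation retracts onto $S \setminus \{2m~\text{points}\}$.

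First I would record the group of the overlap. A sphere with $2m$ punctures has free fundamental group of rank $2m-1$; concretely, if $\mu_1,\dots,\mu_m$ and $\nu_1,\dots,\nu_m$ denote meridian loops around the punctures cut out by the edges $a_i$ and $b_i$ respectively, then they satisfy the single relation $\mu_1 \cdots \mu_m \nu_m^{-1} \cdots \nu_1^{-1} = 1$, obtained by sweeping the product loop across the complementary region of $S$. This is exactly the vertex relation of the contracted tree $\overline{T_1}$.

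Next I would compute the two inclusion-induced homomorphisms on these meridians. On the $B$ side each puncture is encircled by the meridian of the corresponding edge of $\overline{T}$, so under $\pi_1(U \cap W) \to \pi_1(U) = G_{\overline{T}}$ one has $\mu_i \mapsto a_i$ and $\nu_i \mapsto b_i$; since $G_{\overline{T}} = G_{\overline{T_1}}$ is generated by these meridians subject to the single defining relation displayed in the statement, the image of the sphere relation is already present and imposes nothing new. On the $\mathbb{R}^3 \setminus B$ side the same punctures are encircled by the meridians of the tangle strings, and the gluing prescribed in Proposition \ref{p1}---outgoing edges of the forest to incoming strings of the tangle, outgoing strings of the tangle to incoming edges of the forest, with the order of the strands reversed---gives $\nu_i \mapsto b'_{m+1-i}$ and $\mu_i \mapsto a'_{m+1-i}$ under $\pi_1(U \cap W) \to \pi_1(W) = G_{\overline{\beta}}$.

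With these ingredients the Seifert--van Kampen theorem presents $G_\Gamma$ as the pushout $G_{\overline{T}} *_{\pi_1(U \cap W)} G_{\overline{\beta}}$: one collects all the generators $\mathcal{X}, a_1,\dots,a_m,b_1,\dots,b_m$, all the relations $\mathcal{R}$ together with the forest relation $a_1 a_2 \cdots a_m b_m^{-1} b_{m-1}^{-1} \cdots b_1^{-1} = 1$, and then for each meridian generator adjoins the relation equating its two images, namely $b_i = b'_{m+1-i}$ and $a_i = a'_{m+1-i}$ for $i = 1,\dots,m$. This is precisely the asserted presentation. The step needing the most care---and the main obstacle---is the bookkeeping of the meridians: one must fix compatible orientations on $S$ and on the strands so that the single sphere relation coincides with the defining relation of $\overline{T_1}$ and so that the index reversal $i \mapsto m+1-i$ comes out correctly. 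Once this is arranged, the same sphere relation pushed into $G_{\overline{\beta}}$ becomes a consequence of $\mathcal{R}$, so it adds nothing beyond the relations already listed.
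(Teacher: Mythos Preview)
Your argument correctly reproduces, via Seifert--van Kampen, the presentation of the preceding Theorem: generators $\mathcal{X},a_1,\dots,a_m,b_1,\dots,b_m$ with relations $\mathcal{R}$, the forest relation $a_1\cdots a_m b_m^{-1}\cdots b_1^{-1}=1$, and the gluing relations $a_i=a'_{m+1-i}$, $b_i=b'_{m+1-i}$. But this is \emph{not} the presentation asserted in the Corollary, which has only the generators $\mathcal{X}$ and only the extra relations $a'_i=b'_i$. Your final sentence ``This is precisely the asserted presentation'' is therefore false as written.

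The missing ingredient is the hypothesis that governs this Corollary: $\Gamma$ is a \emph{link}. In that case the forest $\overline{T}$ is simply the disjoint union of $m$ unknotted arcs, one per component, and for each such arc the meridians at its two ends coincide, so $a_i=b_i$ in $G_{\overline{T}}$ (equivalently, $G_{\overline{T}}$ is free on $a_1,\dots,a_m$ with $b_i=a_i$, and the long forest relation collapses to the trivial relations $a_ib_i^{-1}=1$). Feeding $a_i=b_i$ into your gluing relations gives $a'_{m+1-i}=b'_{m+1-i}$, i.e.\ $a'_j=b'_j$ for all $j$; and since each $a_i,b_i$ now equals an element already in $\mathcal{X}$, these generators may be eliminated by Tietze transformations, leaving exactly the presentation in the Corollary. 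This specialization step---which in the paper is the one-line remark immediately preceding the Corollary---is what your proposal omits.
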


\bigskip

In \cite{K1} was considered conception of {\it unknotted spatial graph}. In our terms we can reformulate it by the following manner. A spatial graph $\Gamma$ is called unknotted if there is a decomposition $\Gamma = \overline{T}~ \sharp ~ \overline{\beta}$ into a connection sum of a forest and a tangle, where the tangle $\overline{\beta}$ is monotone. A tangle is called monotone if it has a monotone diagram. A strand in a diagram of a tangle is called {\it monotone} if a point going along the oriented strands meets first the upper crossing point at every crossing point. A diagram of a tangle is called {\it monotone} if every its strand is monotone and  there is some ordering of strands such that the strand with number $i$ is upper than the strands with number $j$ for $i < j$. 

As corollary of Theorem \ref{t2} we get

\begin{cor}
Suppose that a finite connected spatial graph $\Gamma$ has a decomposition $\Gamma = \overline{T}~ \sharp ~ \overline{\beta}$. Then
 
 1) if $\overline{\beta}$ is a $(1,1)$-tangle, then $G_{\Gamma}$ is isomorphic to the group $G_K$ of the knot $K$, which is the closure of the tangle $\overline{\beta}$;
 
2) if $\overline{\beta}$ is a monotone tangle with $k$ strands. Then  $G_{\Gamma}$ is the free  group of rank $k$.
\end{cor}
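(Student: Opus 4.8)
The plan is to derive both statements directly from the presentation of $G_\Gamma$ furnished by Theorem~\ref{t2}, specializing the number $m$ of spatial edges and simplifying by Tietze transformations.

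For part (1) I would put $m=1$. Then the tangle has a single incoming string $b'_1$ and a single outgoing string $a'_1$, and the three families of relations in Theorem~\ref{t2} reduce to $a_1 b_1^{-1}=1$ together with $a_1=a'_1$ and $b_1=b'_1$. Eliminating the generators $a_1,b_1$ by the last two identifications turns the vertex relation into $a'_1=b'_1$, so that
$$
G_\Gamma\cong\langle\,\mathcal{X}\mid\mathcal{R},\ a'_1=b'_1\,\rangle .
$$
It remains to recognise the right-hand side as $G_K$. I would argue this at the level of Wirtinger presentations: $\langle\mathcal{X}\mid\mathcal{R}\rangle=G_{\overline{\beta}}$ is read off from the diagram of the $(1,1)$-tangle, with $a'_1,b'_1$ the meridians of the two free ends of its single string. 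Forming the closure $K$ joins those two ends into a single arc of the knot diagram, and the only effect on the Wirtinger presentation is to identify their meridians, i.e.\ to impose exactly $a'_1=b'_1$. Hence the displayed group is a Wirtinger presentation of $G_K$, which proves (1).

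For part (2) I would first show that $G_{\overline{\beta}}$ is free of rank $k$ and that $a'_i=b'_i$ for every $i$. The key observation is that a monotone diagram can have no crossing between two distinct strands: at such a crossing one of the two strands must pass under, contradicting the requirement that every strand meets the upper branch first. Thus the only crossings are self-crossings, each ascending, so every strand is unknotted and the strands are pairwise unlinked and order-preserving; equivalently $\overline{\beta}$ is isotopic rel endpoints to the trivial $k$-string tangle, whose exterior is a punctured-disc times interval. Consequently the Wirtinger relations $\mathcal{R}$ collapse and $G_{\overline{\beta}}$ is free on the meridians $x_1,\dots,x_k$ of the $k$ strands, with the two ends of the $i$-th strand carrying the same meridian, so that $a'_i=b'_i=x_i$. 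Substituting into Theorem~\ref{t2} with $m=k$, $\mathcal{R}=\varnothing$ and $\mathcal{X}=\{x_1,\dots,x_k\}$, and eliminating the $a_i,b_i$ through $a_i=a'_{m+1-i}$, $b_i=b'_{m+1-i}$, the single surviving relation, the vertex relation, becomes
$$
a'_m a'_{m-1}\cdots a'_1 = b'_m b'_{m-1}\cdots b'_1 ,
$$
which under $a'_i=b'_i$ reads $x_m\cdots x_1=x_m\cdots x_1$ and is therefore vacuous. Hence $G_\Gamma\cong G_{\overline{\beta}}$ is free of rank $k$.

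The step I expect to be the main obstacle is the reduction in part (2) showing that the vertex relation is genuinely redundant: a priori one relation imposed on a free group of rank $k$ could lower the rank, so everything hinges on producing free generators in which $a'_i$ and $b'_i$ literally coincide and the ordered products on the two sides agree term by term. This is why the combinatorial analysis of the monotone diagram—forcing the absence of inter-strand crossings and hence an order-preserving, split, trivial tangle—must be carried out carefully; once it is in place, both the freeness of $G_{\overline{\beta}}$ and the collapse of the vertex relation follow. In part (1) the analogous delicate point is the identification, standard but worth stating explicitly, of the closure operation on the single tangle string with the one Wirtinger relation $a'_1=b'_1$.
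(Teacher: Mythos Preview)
Your treatment of part (1) is correct; it is a more explicit, presentation-level version of what the paper does in one sentence, namely observe that $\overline{T}$ is a segment with some vertices on it and that contracting it to a point turns $\Gamma$ into the closure $K$ of $\overline{\beta}$ without changing the complement.

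For part (2) your route diverges from the paper's and carries a genuine gap. The paper does not compute with the presentation at all: it simply notes that a monotone $\overline{\beta}$ makes $\Gamma$ \emph{unknotted} in the sense defined just before the corollary, and then quotes \cite{K1} (an unknotted connected spatial graph is, after contracting a maximal tree, equivalent to a trivial bouquet of circles, hence has free group). Your argument instead rests on the claim that a monotone diagram has no inter-strand crossings. That claim follows only under a very literal reading of the first clause of the definition; but then the second clause---the existence of an ordering of the strands with strand $i$ above strand $j$ for $i<j$---would be vacuous. On the reading that makes both clauses do work (and which matches the usage in \cite{K1}), a monotone tangle may well have inter-strand crossings and hence permute its endpoints. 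In that case your equalities $a'_i=b'_i$ fail (the two ends of a given strand sit at different indices), and the vertex relation no longer collapses term by term. For instance, the one-crossing braid $\sigma_1$ on two strands is monotone, yet $a'_2=b'_1$ while $a'_1$ is only a conjugate of $b'_2$, so the two sides of your displayed vertex relation are not equal as words in the free generators. Repairing this within your framework would require showing that the vertex relation is nevertheless a consequence of the tangle relations for \emph{every} monotone permutation, together with a careful accounting of the cyclic order of half-edges at the contracted vertex---essentially reproving the cited result of \cite{K1} at the level of presentations.
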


\begin{proof}

1) In this case $\overline{T}$ is a line segment with a finite set of vertexes. If we compress this segment into a vertex, then our spatial graph becomes a knot $K$ that is the closure of $\overline{\beta}$.

2) Follows from  \cite{K1}, where  was proved the following assertion: An unknotted connected spatial graph is equivalent to a trivial bouquet of circles after the edge contraction of a maximal tree. In particular, the group of unknotted connected spatial graph is free.

\end{proof}

\section{Conclusion remarks}

Of cause, the decomposition of the spatial graph $\Gamma$ in Proposition \ref{p1} is not unique, because  there are different possibilities in the choice of the maximal forest. We can formulate

\begin{conjecture}
Let $\Gamma$ and $\Gamma'$ are two finite spatial graphs, which correspond to some combinatorial graph $\mathcal{G}$. They are equivalent if and only if there are decompositions $\Gamma = \overline{T}~ \sharp ~ \overline{\beta}$ and $\Gamma' = \overline{T'}~ \sharp ~ \overline{\beta'}$ into the connected sum of a forest and a tangle such that the forest $\overline{T}$ is equivalent to the forest $\overline{T'}$ (as spatial graphs) and the tangle $\overline{\beta}$ is equivalent to the tangle $\overline{\beta'}$.
\end{conjecture}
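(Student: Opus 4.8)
The plan is to treat the two implications separately, because the backward implication is essentially a gluing argument whereas the forward implication carries the real content (and is presumably why the statement is posed as a conjecture rather than a theorem).

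\textbf{The backward direction.} Suppose we are given decompositions $\Gamma=\overline{T}~\sharp~\overline{\beta}$ and $\Gamma'=\overline{T'}~\sharp~\overline{\beta'}$, witnessed by balls $B,B'$ whose boundary spheres meet the graphs transversally in the $2m$ gluing points, with $\overline{T}\simeq\overline{T'}$ and $\overline{\beta}\simeq\overline{\beta'}$. The idea is to assemble a global equivalence by gluing the two given homeomorphisms along the decomposition spheres. The one point demanding care is compatibility of boundary behavior: the tangle equivalence may be taken rel the ordered boundary endpoints, while the forest equivalence, being an equivalence of spatial graphs realizing the same combinatorial forest, matches the distinguished valence-one gluing vertices combinatorially. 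I would first isotope both homeomorphisms so that on a collar of the decomposition sphere they agree as maps taking $\partial B$ to $\partial B'$ and gluing points to corresponding gluing points; the two induced boundary maps can be aligned because the gluing correspondence is fixed combinatorial data of the decomposition. Cutting along the spheres and regluing the adjusted homeomorphisms then produces an orientation-preserving self-homeomorphism of $\mathbb{R}^3$ carrying $\Gamma$ to $\Gamma'$.

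\textbf{The forward direction.} Assume $\Gamma\simeq\Gamma'$, both realizing $\mathcal{G}$. Here one is free to choose the decompositions, so I would use the explicit ones from Proposition \ref{p1}, applied to $\Gamma$ and $\Gamma'$ with one and the same combinatorial maximal forest of $\mathcal{G}$ and the same linear ordering of gluing points. The construction of Proposition \ref{p1} produces forests $\overline{T},\overline{T'}$ that both lie essentially in a plane and are therefore standard planar realizations of the same combinatorial forest with the same arrangement of gluing points. Consequently $\overline{T}$ and $\overline{T'}$ are automatically equivalent, so the forest condition is satisfied for free and the entire burden shifts to the tangles: I must show that the equivalence $\Gamma\simeq\Gamma'$ forces $\overline{\beta}\simeq\overline{\beta'}$. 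The natural route is to isotope a given equivalence $h:\mathbb{R}^3\to\mathbb{R}^3$ with $h(\Gamma)=\Gamma'$ so that it carries the first decomposition sphere onto the second, for then $h$ restricts to a homeomorphism of the tangle boxes realizing $\overline{\beta}\simeq\overline{\beta'}$.

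\textbf{The main obstacle.} The crux is precisely this last step, and it is where I expect the genuine difficulty to lie. An arbitrary equivalence $h$ with $h(\Gamma)=\Gamma'$ has no reason to respect any decomposition sphere: the two spheres are each obtained through a long sequence of Reidemeister-type moves and isotopies in the construction of Proposition \ref{p1}, and they may sit in $\mathbb{R}^3$ in completely unrelated ways. Since such a sphere meets $\Gamma$ in $2m$ points and is typically compressible in the complement, the incompressible-sphere uniqueness arguments familiar from prime knot decomposition do not apply, and distinct maximal-forest choices can yield non-isotopic decomposition spheres. To close the gap one would need a normal-form or move-calculus result asserting that any two forest-tangle decompositions of a fixed spatial graph are connected by an explicit list of moves --- for instance base-exchange moves among maximal forests of $\mathcal{G}$, lifted to controlled sphere isotopies and finger moves on the tangle --- through which an ambient equivalence could be propagated while preserving tangle equivalence. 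Establishing such a calculus is the true content of the conjecture, and without an incompressibility or minimality hypothesis on the decomposition sphere I expect the forward implication may well require additional assumptions; the most realistic near-term outcome is a rigorous proof of the backward direction together with either a counterexample or a corrected hypothesis for the forward direction.
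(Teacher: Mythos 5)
This statement is posed in the paper as a \emph{conjecture}, in the concluding remarks, and the paper offers no proof of it whatsoever; there is therefore no argument of the authors to compare yours against. Your assessment of the forward direction is essentially correct and matches the authors' own framing: immediately after the conjecture they pose the question of whether an analog of Markov's theorem holds relating different decompositions $\Gamma = T_0 ~\sharp~ \beta$ of one spatial graph, which is precisely the move-calculus you identify as the missing ingredient. Your observation that the decomposition sphere meets $\Gamma$ in $2m$ points, is typically compressible in the complement, and hence escapes the uniqueness arguments from prime decomposition, is a sound diagnosis of why the forward implication is hard; and your proposal to normalize both decompositions via Proposition \ref{p1} with the same combinatorial maximal forest correctly reduces the forest condition to a triviality while leaving the tangle comparison as the open core. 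So as a referee's analysis of the conjecture your proposal is reasonable; as a proof it is, by your own admission, incomplete, which is the honest state of affairs.

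One substantive caution about the part you do claim: your backward direction silently resolves an ambiguity that the paper leaves open, and under the other reading the implication is false. The paper never specifies whether ``the tangle $\overline{\beta}$ is equivalent to $\overline{\beta'}$'' means equivalence rel the ordered boundary points or merely equivalence of pairs $(B, \overline{\beta}) \simeq (B', \overline{\beta'})$. Under the weaker pair-equivalence reading, take $\Gamma$ and $\Gamma'$ to be two distinct $2$-bridge knots presented as closures of rational tangles with trivial forests: every rational tangle is homeomorphic as a pair to the trivial $2$-string tangle, so the forests and tangles are pairwise equivalent while the glued graphs are not. Your collar-adjustment argument does not repair this, because two equivalences restricted to the punctured decomposition spheres may differ by a nontrivial mapping class of the $2m$-punctured sphere; absorbing that difference into a collar changes the tangle by composition with a braid, so the reglued homeomorphism carries $\Gamma$ to the closure of $\overline{\beta'}$ twisted by that braid rather than to $\Gamma'$. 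The backward direction is only safe if equivalence of tangles is taken rel boundary (endpoints fixed pointwise, with the prescribed gluing correspondence), and a complete treatment should state that hypothesis explicitly rather than arrange it by isotopy after the fact.
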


When we have decomposition of two spatial graphs in the connected sum of a forest and a braid, then the equivalence decision problem is more complicated and we formulate

\begin{question}
Let $\Gamma = T_0~ \sharp ~ \beta$ and $\Gamma = T_0'~ \sharp ~ \beta'$ be two decompositions of some finite spatial graph $\Gamma$. How are they related? Is it
possible to prove some analog of Markov theorem for spatial graphs?
\end{question}

\medskip

We know the following problem: for the spatial graph $\Gamma$ find all links, which can be
embedding in $\Gamma$. A {\it constituent link} of a spatial graph $\Gamma$ is a link contained in $\Gamma$. Conway and Gordon \cite{CG} proved that every spatial 6-complete graph $K_6$ contains a non trivial constituent link and every spatial 7-complete graph $K_7$ contains a non trivial constituent knot.
Is it possible to prove these results, using a decomposition of a spatial graph, constructed in the present paper?

\medskip

We know the construction of a knot quandle.

\begin{question}
Is it possible to define for any spatial graph $\Gamma$ a quandle $Q_{\Gamma}$ such that, if $\Gamma'$ is equivalent to $\Gamma$, the $Q_{\Gamma}$ is isomorphic to $Q_{\Gamma'}$?
\end{question}

\medskip


 \end{document}